\numberwithin{equation}{section}
\theoremstyle{plain}
\newtheorem{theorem}{Theorem}[section]
\newtheorem{corollary}[theorem]{Corollary}
\newtheorem{definition}[theorem]{Definition}
\newtheorem{lemma}[theorem]{Lemma}
\newtheorem{proposition}[theorem]{Proposition}
\newtheorem{remark}[theorem]{Remark}
\newtheorem{example}[theorem]{Example}
\newtheorem*{Theorem 1}{Theorem 1}
\newtheorem*{Theorem 2}{Theorem 2}
\newtheorem*{Question 1}{Question 1}
\newtheorem*{Question 2}{Question 2}
\newtheorem*{Prop}{Proposition}
\begin{document}
\vskip 5.1cm
\title{Entropy Formula for Random $\mathbb{Z}^k$-actions\footnotetext{\\
\emph{ 2010 Mathematics Subject Classification}: 37A35, 37C85, 37H99.\\
\emph{Keywords and phrases}: entropy formula; random $\mathbb{Z}^k$-action; Lyapunov exponent; Friedland's entropy.\\
The author is supported by  NSFC(No:11371120), NSFHB(No:A2014205154), BRHB(No:BR2-219) and GCCHB(No:GCC2014052). }}
 \author {Yujun Zhu\\
\small {College of Mathematics and Information Science, and}\\
   \small {Hebei Key Laboratory of Computational Mathematics and Applications,}\\
\small {Hebei Normal University, Shijiazhuang, Hebei, 050024, P.R.China}}
\date{}
\maketitle
\begin{center}
\begin{minipage}{130mm}
{\bf Abstract}: In this paper, entropies, including measure-theoretic entropy and topological entropy, are considered for  random $\mathbb{Z}^k$-actions which are generated by random compositions of
the generators of $\mathbb{Z}^k$-actions. Applying Pesin's theory for commutative diffeomorphisms we obtain a measure-theoretic entropy formula of $C^{2}$ random $\mathbb{Z}^k$-actions via the Lyapunov spectra of the generators.
Some formulas and bounds of topological entropy for certain random $\mathbb{Z}^k$(or $\mathbb{Z}_+^k$ )-actions generated by more general maps, such as Lipschitz maps, continuous maps on finite graphs and $C^{1}$ expanding maps, are also obtained. Moreover, as an application, we give a formula of Friedland's entropy for certain $C^{2}$ $\mathbb{Z}^k$-actions.
\end{minipage}
\end{center}

\section{Introduction}

It is well known that entropies, including measure-theoretic entropy and topological entropy, are important invariants which play important roles in the study of complexity of the dynamical systems. In deterministic systems, one usually consider the behavior of the iterations of a single transformation. In this paper we are concerned with the entropies of random $\mathbb{Z}^k(k\ge1)$-actions which are generated by random compositions of the generators of $\mathbb{Z}^k$-actions.

Throughout this paper, we assume that $(X, d)$  is a compact metric space and $C^0(X, X)$ is the space of continuous maps on $X$ equipped with the $C^0$-topology. When $X=M$ is a closed (i.e., compact and boundaryless) $C^{\infty}$ Riemannian manifold, we denote $C^r(X, X)$ or $C^r(M, M)(r\geq 1)$ the space of $C^r$ maps equipped with the $C^r$-topology.

Let $\alpha:\mathbb{Z}^k\longrightarrow C^r(X, X)$ ($r\geq0$) be a $C^r$ $\mathbb{Z}^k$-action on $X$. We denote the collection of generators of $\alpha$ by
\begin{equation}\label{generator}
\mathcal{G}_{\alpha}=\{f_i=\alpha(\vec{e}_i):1\leq i\leq k\},
\end{equation}
where $\vec{e}_i=(0,\cdots,\stackrel{(i)}{1},\cdots,0)$ is the standard $i$-th generator of $\mathbb{Z}^k$.  Clearly, all $f_i, 1\le i\le k$, are homeomorphisms when $r=0$, and are $C^r$ diffeomorphisms when $r\geq 1$. Let
\begin{equation}\label{Omega}
\Omega=\mathcal{G}_{\alpha}^{\mathbb{Z}}=\prod_{-\infty}^\infty \mathcal{G}_{\alpha}
\end{equation}
be the infinite product of $\mathcal{G}_{\alpha}$, endowed with the product topology and the product Borel $\sigma$-algebra $\mathcal{A}$,
and let $\sigma$ be the left shift operator on $\Omega$ which is defined by $(\sigma\omega)_n=\omega_{n+1}$ for $\omega=(\omega_n)\in \Omega$. Given $\omega=(\omega_n)\in \Omega$,
we write $f_{\omega}=\omega_0$ and
$$
f_{\omega}^n:=\left\{\begin{array}{ll}
f_{\sigma^{n-1}\omega}\circ\cdots\circ f_{\sigma\omega}\circ f_{\omega} \quad &\;\;n>0\\
id \quad &\;\;n=0\\
f_{\sigma^{n}\omega}^{-1}\circ\cdots\circ f_{\sigma^{-2}\omega}^{-1}\circ f_{\sigma^{-1}\omega}^{-1} \quad &\;\;n<0.
\end{array}\right.
$$
Clearly, each $\omega$ induces a nonautonomous dynamical system generated by the sequence of maps $\{f_{\sigma^i\omega}\}_{i\in \mathbb{Z}}$. There is a natural skew product transformation $\Psi: \Omega\times X\longrightarrow \Omega\times X$ over $(\Omega, \sigma)$ which is defined by
\begin{equation}\label{Psi}
\Psi(\omega, x)=(\sigma\omega, f_{\omega}(x)).
\end{equation}

Let $\alpha:\mathbb{Z}^k\longrightarrow C^r(X, X)$ ($r\geq0$) be a $C^r$ $\mathbb{Z}^k$-action on $X$ and $\nu$ a probability measure on $\mathcal{G}_{\alpha}$. We can define a probability
measure $\mathbf{P}_{\nu}=\nu^{\mathbb{Z}}$ on $\Omega$ which is invariant and ergodic with respect
to $\sigma$. By the induced $C^r(r\geq0)$ \emph{random $\mathbb{Z}^k$-action $f$ over $(\Omega, \mathcal{A}, \mathbf{P}_{\nu}, \sigma)$} we  mean the  system generated by the randomly composed maps $f_{\omega}^n$, $n\in \mathbb{Z}$. It is also called a $C^r $  \emph{i.i.d. (i.e., independent and identically distributed) random $\mathbb{Z}^k$-action}. We are interested in dynamical behaviors of these actions
for $\mathbf{P}_{\nu}$-a.e. $\omega$ or on the average on $\omega$.

When replacing ``$\mathbb{Z}^k$" and ``$n\in \mathbb{Z}$" in the above definition of random $\mathbb{Z}^k$-actions by ``$\mathbb{Z}_+^k$"(here $\mathbb{Z}_+=\{0,1,2,\cdots\}$) and ``$n\in \mathbb{Z}_+$" respectively, we can define random $\mathbb{Z}_+^k$-actions similarly.

The main task of this paper is to investigate the entropies, including measure-theoretic entropy and topological entropy, of random $\mathbb{Z}^k$(or $\mathbb{Z}_+^k$ )-actions. A measure-theoretic entropy formula of $C^{2}$ random $\mathbb{Z}^k$-actions is given in Section 2. Some formulas and bounds of topological entropy for certain random $\mathbb{Z}^k$(or $\mathbb{Z}_+^k$ )-actions are obtained in Section 3. Moreover, as an application, we give a formula of Friedland's entropy for certain $C^{2}$ $\mathbb{Z}^k$-actions in Section 4. In the remaining of this section, we introduce the basic notions and state the main results.

\subsection{Measure-theoretic entropy of random $\mathbb{Z}^k$-actions}

In this subsection and the next subsection, we consider the measure-theoretic entropy and topological entropy of random $\mathbb{Z}^k$-actions respectively.  The basic notions are derived from Kifer \cite{Kifer} and Liu \cite{Liu} in which the ergodic theory of general random dynamical systems are systematically investigated. We can see \cite{Burton} for some progress in the research of entropy for random $\mathbb{Z}^k$-actions.

Let $\alpha:\mathbb{Z}^k\longrightarrow C^r(X, X)$ ($r\geq0$) be a $C^r$ $\mathbb{Z}^k$-action on $X$.
A Borel probability measure $\mu$ on $X$ is called  \emph{$\alpha$-invariant} (resp. \emph{ergodic}) if
 it is invariant (resp. ergodic) with respect to each $f_i,1\le i\le k$.  Let $\nu$ be a probability measure on $\mathcal{G}_{\alpha}$ and $f$ the induced random $\mathbb{Z}^k$-action over $(\Omega, \mathcal{A}, \mathbf{P}_{\nu}, \sigma)$. A Borel probability measure $\mu$ on $X$ is called \emph{$f$-invariant} if
$$
\int_{\Omega} \mu(f_{\omega}^{-1}A)d\mathbf{P}_{\nu}(\omega)=\mu(A)
$$
for all Borel $A\subset X$. Clearly, an $\alpha$-invariant measure must be $f$-invariant.

Let $\mathcal{P}=\{P_k\}$ be a finite or countable partition of $\Omega\times X$ into measurable sets then
$\mathcal{P_{\omega}}=\{P_{k,\omega}\}$, here $P_{k,\omega}=\{x\in X : (\omega, x)\in P_k\}$, is a partition of $X$.

\begin{definition}
Let $\alpha:\mathbb{Z}^k\longrightarrow C^r(X, X)$ ($r\geq0$) be a $C^r$ $\mathbb{Z}^k$-action on $X$ and $\nu$ a probability measure on $\mathcal{G}_{\alpha}$. Let $f$ be the induced random $\mathbb{Z}^k$-action over $(\Omega, \mathcal{A}, \mathbf{P}_{\nu}, \sigma)$ and $\mu$ an $f$-invariant measure. For a finite or countable Borel partition $\mathcal{P}$ of $\Omega\times X$,  the limit
\begin{equation}\label{measentr}
h_{\mu}(f,\mathcal{P}):=\lim_{n\rightarrow\infty}\frac{1}{n}\int_{\Omega}
H_{\mu}
\big(\bigvee_{i=0}^{n-1}(f_{\omega}^i)^{-1}\mathcal{P_{\omega}}
\big)\;d\mathbf{P}_{\nu}(\omega),
\end{equation}
where $H_{\mu}(\mathcal{Q}):=-\sum\limits_{A\in\mathcal{Q}}\mu(A)\log\mu(A)$ for a finite or countable
partition $\mathcal{Q}$ of $X$, exists. The number
$$
h_{\mu}(f):=\sup_{\mathcal{P}}h_{\mu}(f,\mathcal{P}),
$$
where $\mathcal{P}$ ranges over all finite  or countable partitions of $\Omega\times X$, is called the
\emph{measure-theoretic entropy} of $f$.
\end{definition}
In fact, since $(\Omega, \mathcal{A}, \mathbf{P}_{\nu}, \sigma)$ is ergodic and measurably invertible, taking integration in
(\ref{measentr}) is extraneous and the limit exists and is constant for $\mathbf{P}_{\nu}$-a.e. $\omega$. However, when considering a random $\mathbb{Z}_+^k$-action (here $\mathbb{Z}_+=\{0,1,2,\cdots\}$) over $(\Omega, \mathcal{A}, \mathbf{P}_{\nu}, \sigma)$, where $\Omega=\mathcal{G}_{\alpha}^{\mathbb{Z}_+}=\prod_0^\infty \mathcal{G}$ for the set $\mathcal{G}$ of generators with respect to a $\mathbb{Z}_+^k$-action $\alpha:\mathbb{Z}_+^k\longrightarrow C^r(X, X)$, we must keep the integration in the corresponding definition.

\begin{remark}\label{finitepartition}
Usually, we often use the following equivalent definition of $h_{\mu}(f)$,
$$
h_{\mu}(f)=\sup_{\mathcal{P}}h_{\mu}(f,\mathcal{P}),
$$
where $\mathcal{P}$ ranges over all finite  partitions of $\Omega\times X$ in form of $\mathcal{Q}\times \Omega$ in which $\mathcal{Q}$ is a finite partition of $X$.
\end{remark}

It is well known that Pesin's entropy formula plays an important role in smooth ergodic theory, including the deterministic case and the random case. For the deterministic case we refer to \cite{Ruelle}, \cite{Pesin}, \cite{Mane} \cite{Ledrappier-Strelcyn} and \cite{Ledrappier-Young}, and for the random case, we refer to \cite{Bahnmuller0}, \cite{Bahnmuller}, \cite{Liu}  and \cite{Liu01}. It gives an explicit formula relating the measure-theoretic entropy and Lyapunov exponents in corresponding settings. In particular, for a general $C^2$ i.i.d. random dynamical system $f$ which is defined by replacing ``$\mathcal{G}_{\alpha}$" by ``Diff$^2(X)$" (i.e., the set of $C^2$ diffeomorphisms)  in the above notations for random  $\mathbb{Z}^k$-action, if
$$
\log^+\|f_{\omega}\|_{C^2}, \log\inf_{x\in M}|\det D_xf_{\omega}| \in L^1(\Omega, \mathbf{P}_{\nu}),
$$
then for any $f$-invariant measure $\mu$ which is absolutely continuous with respect to the Lebesgue measure of $X$, the following entropy formula
\begin{equation}\label{Liufomula}
h_{\mu}(f)=\int_X \sum_{\lambda_i(x)>0}\lambda_i(x)d_i(x)d\mu(x)
\end{equation}
holds, where
$\{(\lambda_i(x),d_i(x))\}$ is the essentially non-random Lyapunov spectrum of $f$.   Though the Lyapunov spectrum of $f$ is essentially non-random, we can not expect to give some explicit relation between it and the spectra of the elements in Diff$^2(X)$.
Now, for  a $C^2$  random $\mathbb{Z}^k$-action $f$,  $\mathcal{G}_{\alpha}$ consists of finite elements which are pairwise commutative. A natural question is
 \begin{Question 1}
Is there an entropy formula via the Lyapunov spectra of the generators ?
\end{Question 1}

The main aim of this paper is to give a positive answer of the above question under the assumption that the measure $\mu$ is $\alpha$-invariant and is absolutely continuous with respect to the Lebesgue measure of $X$. Hence the measure-theoretic entropy of such $C^2$  random $\mathbb{Z}^k$-actions is, in a sense,  easer to be calculated.

Section 2 is devoted to the proof of a measure-theoretic entropy formula, i.e., the main result of this paper.

\begin{Theorem 1}\label{Thm 1}
Let $\alpha:\mathbb{Z}^k\longrightarrow C^2(M, M)$ be a $C^2$ $\mathbb{Z}^k$-action on a $d$-dimensional closed Riemannian manifold $M$ and $\nu$ a probability measure on $\mathcal{G}_{\alpha}$. Then for the induced random $\mathbb{Z}^k$-action $f$ over $(\Omega, \mathcal{A}, \mathbf{P}_{\nu}, \sigma)$ and any $\alpha$-invariant measure $\mu$ which is absolutely continuous with respect to the Lebesgue measure $m$, we have the following entropy formula
\begin{equation}\label{entropyformula1}
h_{\mu}(f)=\int_M\max_{J\subset \{1,\cdots,s(x)\}}\sum_{j\in J}\sum_{i=1}^k \nu_i d_j(x)\lambda_{i,j}(x)d\mu(x),
\end{equation}
where $\nu_i=\nu(f_i)$ and $\{(\lambda_{i,j}(x),d_j(x)): 1\le i\le k, 1\le j\le s(x)\}$ is the spectrum of $\alpha$  (see Definition \ref{spectrum} for the precise definition).

In particular, if $\mu$ is $\alpha$-ergodic, then we have
\begin{equation}\label{entropyformula2}
h_{\mu}(f)=\max_{J\subset \{1,\cdots,s\}}\sum_{j\in J}\sum_{i=1}^k \nu_i d_j\lambda_{i,j}.
\end{equation}
(Note that when $\mu$ is $\alpha$-ergodic, then $\lambda_{i,j}(x),d_j(x)$ and $s(x)$ are all constant and we denote them by $\lambda_{i,j},d_j$ and $s$, respectively.)
\end{Theorem 1}

The strategy to prove Theorem 1 is to adapt the proof of Ruelle's inequality in Theorem S2.13 of \cite{Katok} and the proof of the reverse inequality in section 13 of \cite{Mane1} for any $C^2$ diffeomorphisms to oue case.  In the proof, Pesin's theory for commutative $C^2$ diffeomorphisms developed by Hu (\cite{Hu}) and Birkhoff's Ergodic Theorem play important roles.

\subsection{Topological entropy of random $\mathbb{Z}^k$ and $\mathbb{Z}_+^k$-actions}
Let $\alpha:\mathbb{Z}^k$ (resp. $\mathbb{Z}_+^k)\longrightarrow C^r(X, X)$ ($r\geq0$) be a $C^r$ $\mathbb{Z}^k$ (resp. $\mathbb{Z}_+^k)$-action on $X$, $\nu$ a probability measure on $\mathcal{G}_{\alpha}$ and $f$  the induced random $\mathbb{Z}^k$ (resp. $\mathbb{Z}_+^k)$-action over $(\Omega, \mathcal{A}, \mathbf{P}_{\nu}, \sigma)$.  Define a family of metrics $\{d^n_\omega: n\in\mathbb{Z_+},\omega\in
\Omega\}$ on $X$ by
$$
d^n_\omega(x, y)=\max_{0\leq i\leq n-1}d(f^i_\omega(x), f^i_\omega(y))
$$
for $x, y \in X$.
Let $\varepsilon>0, \omega\in \Omega$ and $K$ be a subset of $X$. A set $F\subset X$ is said
to be an $(\omega,n,\varepsilon)$-\emph{spanning set} of $K$ if
for any $x\in K$ there exists $y\in F$ such that
$d_\omega^n(x,y)\leq \varepsilon.$ Let $r(\omega,n,\varepsilon,K)$
denote the smallest cardinality of any
$(\omega,n,\varepsilon)$-spanning set of $K$. A subset $E\subset
K$ is said to be an $(\omega,n,\varepsilon)$-\emph{separated set}
of $K$ if $x,y\in E,x\neq y$ implies $d_\omega^n(x,y)>
\varepsilon$.
 Let $s(\omega,n,\varepsilon,K)$
denote the largest cardinality of any
$(\omega,n,\varepsilon)$-separated set of $K$.
\begin{definition}
Let $f$ be a random $\mathbb{Z}^k$ (or $\mathbb{Z}_+^k)$-action over $(\Omega, \mathcal{A}, \mathbf{P}_{\nu}, \sigma)$ as above. Then the limit
\begin{equation}\label{hphi}
h(f):=\lim_{\varepsilon \rightarrow
0}\limsup_{n\rightarrow\infty} \frac{1}{n}\int_{\Omega} \log
 r(\omega,n,\varepsilon,X)\;d\mathbf{P}_{\nu}(\omega)
\end{equation}
exists. The number $h(f)$ is called the
\emph{topological entropy} of $f$.
\end{definition}

From \cite{Kifer1} we have
\begin{equation}\label{hphi1}
h(f)=\int_{\Omega}\lim_{\varepsilon \rightarrow
0}\limsup_{n\rightarrow\infty} \frac{1}{n} \log
 r(\omega,n,\varepsilon,X)\;d\mathbf{P}_{\nu}(\omega).
\end{equation}
Moreover, we can get the same value of $h(f)$ when we replace ``$\limsup$" by ``$\liminf$", or replace   ``$r(\omega,n,\varepsilon,X)$" by ``$s(\omega,n,\varepsilon,X)$" in (\ref{hphi}) and (\ref{hphi1}). If we denote the topological entropy of the nonautonomous dynamical system $\{f_{\sigma^i\omega}\}_{i\in \mathbb{Z}_+}$  by $h(f, \omega)$, then by
\cite{Kolyada},
$$
h(f, \omega)=\lim_{\varepsilon \rightarrow
0}\limsup_{n\rightarrow\infty} \frac{1}{n} \log
 r(\omega,n,\varepsilon,X),
$$
and hence
\begin{equation}\label{topint}
h(f)=\int_{\Omega} h(f, \omega)\;d\mathbf{P}_{\nu}(\omega).
\end{equation}

Generally, we have the following inequality relating topological entropy and measure-theoretic entropy,
\begin{equation}\label{VP}
h(f)\geq \sup_{\mu}h_{\mu}(f),
\end{equation}
where $\mu$ ranges over all $f$-invariant measures. (\ref{VP}) is called a \emph{variational principle}. For more general random dynamical systems, (\ref{VP})  may hold as an equality, see Theorem 3.1 in \cite{Liu01} for example. In general, to give an explicit formula of topological  entropy, even for a deterministic system, is not an easy work. One often give the lower or upper bounds of topological entropy via various topological quantities, such as degrees and volume growth rates, see \cite{Katok} and \cite{Franks} for example.

In section 3, we give some formulas and bounds of topological entropy for certain random $\mathbb{Z}^k$(or $\mathbb{Z}_+^k$)-actions generated by more general maps, such as Lipschitz maps (Proposition \ref{Lips}), continuous maps on finite graphs and $C^{1}$ expanding maps (Proposition \ref{twohi}). By (\ref{topint}),  topological entropy $h(f)$ is the integral of that of nonautonomous dynamical systems $h(f,\omega), \omega\in \Omega$. By some known results about  topological entropy of nonautonomous dynamical systems and Birkhoff's Ergodic Theorem, we get the following formulas and bounds.

\begin{Prop}
 Let $\alpha:\mathbb{Z}_+^k\longrightarrow C^r(X, X)$ ($r\geq0$) be a $C^r$ $\mathbb{Z}_+^k$-action on $X$, $\nu$ a probability measure on $\mathcal{G}_{\alpha}$ with $\nu_i=\nu(f_i)$, and $f$  the induced random $\mathbb{Z}_+^k$-action over $(\Omega, \mathcal{A}, \mathbf{P}_{\nu}, \sigma)$.

 (1) If the generators $f_i, 1\leq i\leq k$, are all Lipschitz maps with the Lipschitz constants $L(f_i), 1\leq i\leq k$, respectively, then we have
$$
 h(f)\leq D(X)\sum_{i=1}^k \nu_i\log L^+(f_i),
$$
 where $L^+(f_i)=\max\{1, L(f_i)\}$ and $D(X)$ is the ball dimension of $X$.

(2) If $X$ is a finite graph and  the generators $f_i, 1\leq i\leq k$, are all homeomorphisms, then $h(f)=0$.

(3) If $X=M$ is a closed oriented Riemannian manifold and  the generators $f_i, 1\leq i\leq k$, are all $C^1$ expanding maps, then we have
$$
 h(f)=\sum_{i=1}^k \nu_i\log |\deg(f_i)|,
$$
 where $\deg(f_i)$ is the degree of $f_i$.
\end{Prop}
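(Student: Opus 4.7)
The unifying strategy is to use the identity (\ref{topint}), $h(f)=\int_\Omega h(f,\omega)\,d\mathbf{P}_\nu(\omega)$, to reduce every claim to a statement about the topological entropy $h(f,\omega)$ of the nonautonomous system $\{f_{\sigma^i\omega}\}_{i\in\mathbb{Z}_+}$ along a $\mathbf{P}_\nu$-typical sample path. For such nonautonomous systems there are known estimates (from the Kolyada--Snoha framework quoted in the paper) expressing $h(f,\omega)$ in terms of a Birkhoff-type average of a function on $(\Omega,\mathcal{A},\mathbf{P}_\nu,\sigma)$. Since the base is ergodic, the Birkhoff ergodic theorem will convert that time average into the integral $\sum_{i=1}^k \nu_i \,\psi(f_i)$ for the appropriate $\psi$, and a final integration in $\omega$ produces the stated bound or formula.

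For part (1), the standard Lipschitz estimate for nonautonomous systems yields
\[
h(f,\omega)\leq D(X)\,\limsup_{n\to\infty}\frac{1}{n}\sum_{j=0}^{n-1}\log L^+(f_{\sigma^j\omega}).
\]
I would apply Birkhoff's ergodic theorem to the integrable function $\omega\mapsto \log L^+(f_\omega)$, which takes only the finitely many values $\log L^+(f_i)$; the limit equals $\sum_{i=1}^k \nu_i\log L^+(f_i)$ for $\mathbf{P}_\nu$-a.e.\ $\omega$, and integrating gives the bound. For part (3), the analogous nonautonomous result for $C^1$ expanding maps on a closed oriented manifold gives
\[
h(f,\omega)=\lim_{n\to\infty}\frac{1}{n}\sum_{j=0}^{n-1}\log|\deg(f_{\sigma^j\omega})|,
\]
since degree is multiplicative under composition and each $f_i$ is expanding; once again Birkhoff turns this into $\sum_{i=1}^k \nu_i\log|\deg(f_i)|$. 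Because the right-hand side is $\mathbf{P}_\nu$-a.s.\ constant, (\ref{topint}) yields the desired equality.

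For part (2), I would invoke the fact that for a homeomorphism of a finite graph the topological entropy is zero, together with its nonautonomous extension: a nonautonomous sequence of homeomorphisms on a finite graph still has zero topological entropy (no separation can be produced at exponential rate, since homeomorphisms of a finite graph have only polynomially many distinguishable orbits on scale $\varepsilon$). Then $h(f,\omega)=0$ for every $\omega$, and integrating gives $h(f)=0$.

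The main obstacle I anticipate is justifying the nonautonomous entropy estimates in exactly the form used: the Lipschitz bound and the expanding-map formula in parts (1) and (3) require some care because the sample sequence $\{f_{\sigma^j\omega}\}$ lies in a finite set of maps with uniform constants, so uniformity in the Kolyada--Snoha-type arguments is automatic; however, matching a $\limsup$ (which is what the nonautonomous entropy is defined by) with the Birkhoff limit needs a brief verification that the $\limsup$ is actually a limit along typical $\omega$. Part (2) is the subtler one: stating and justifying the zero-entropy result for a nonautonomous family of homeomorphisms of a finite graph is where the most work lies, and I would treat it as the key lemma of this item, most likely by reducing to the known deterministic case via uniform equicontinuity of $\{f_i^{\pm 1}\}$ and the finite-branch structure of the graph.
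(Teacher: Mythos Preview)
Your overall strategy and your treatment of part~(1) match the paper's exactly: bound $h(f,\omega)$ by $D(X)\limsup_n \frac{1}{n}\sum_{j=0}^{n-1}\log L^+(f_{\sigma^j\omega})$, apply Birkhoff to the bounded function $\omega\mapsto\log L^+(f_\omega)$, and integrate.

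For parts~(2) and~(3) you take a somewhat different route from the paper. The paper introduces a single auxiliary invariant, the \emph{pointwise preimage entropy}
\[
h_{(m)}(f,\omega)=\lim_{\varepsilon\to 0}\limsup_{n\to\infty}\frac{1}{n}\log\sup_{x\in X}r\bigl(\omega,n,\varepsilon,(f_\omega^n)^{-1}(x)\bigr),
\]
and appeals to adapted versions of results of Nitecki--Przytycki to obtain the equality $h(f,\omega)=h_{(m)}(f,\omega)$ both for continuous maps on finite graphs and for $C^1$ expanding maps on closed manifolds. Part~(2) then becomes a one-liner: for homeomorphisms each fibre $(f_\omega^n)^{-1}(x)$ is a single point, so $h_{(m)}(f,\omega)=0$ and hence $h(f)=0$. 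For part~(3) the preimage entropy supplies the upper bound $h_{(m)}(f,\omega)\le\limsup_n\frac{1}{n}\sum_{j=0}^{n-1}\log|\deg f_{\sigma^j\omega}|$ directly from the cardinality of preimages, while the matching lower bound $h(f,\omega)\ge\limsup_n\frac{1}{n}\sum_{j=0}^{n-1}\log|\deg f_{\sigma^j\omega}|$ is quoted from a separate nonautonomous result of Zhang--Chen; Birkhoff then finishes exactly as in your outline. Your proposed direct approach---establishing a nonautonomous zero-entropy lemma for graph homeomorphisms via equicontinuity, and invoking a ready-made nonautonomous ``$h=\log|\deg|$'' formula for expanding maps---is workable, but it obliges you to prove or locate those nonautonomous statements yourself; the paper's preimage-entropy device handles both cases uniformly and turns what you flagged as the ``key lemma'' of part~(2) into a triviality.
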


\subsection{Friedland's entropy of  $\mathbb{Z}^k$-actions}
Friedland's entropy of  $\mathbb{Z}^k$-actions was introduced by Friedland \cite{Friedland} via the topological entropy of the shift map on the induced orbit space. More precisely,
let  $\alpha:\mathbb{Z}^k\longrightarrow C^r(X, X) (r\ge 0)$ be a $\mathbb{Z}^k$-action on $X$ with the generators $\{f_i\}_{i=1}^k$. Define the \emph{orbit space} of $\alpha$ by
$$
X_{\alpha}=\bigl\{\bar{x}=\{x_n\}_{n\in
{\mathbb{Z}}}\in\prod_{n\in{\mathbb{Z}}}X : \text{ for any }  n\in{\mathbb{Z}}, f_{i_n}(x_n)=x_{n+1}
\text{ for some } f_{i_n}\in\{f_i\}_{i=1}^k \bigr\}.
$$
This is a closed subset of the compact space
$\prod\limits_{n\in{\mathbb{Z}}}X$ and so is again compact. A
natural metric $\bar{d}$ on $X_{\alpha}$ is defined by
\begin{equation}\label{metric}
\bar{d}\bigl(\bar{x},\bar{y}\bigr)=\sum_{n=-\infty}^\infty\frac{d(x_n,y_n)}{2^{|n|}}
\end{equation}
for $\bar{x}=\{x_n\}_{n\in
{\mathbb{Z}}}, \bar{y}=\{y_n\}_{n\in
{\mathbb{Z}}}\in X_{\alpha}$.
We can define a shift map
$$
\sigma_{\alpha}: X_{\alpha}\rightarrow X_{\alpha},\;\sigma_{\alpha}(\{x_n\}_{n\in {\mathbb{Z}}})=\{x_{n+1}\}_{n\in
{\mathbb{Z}}}.
$$
 Thus we have associated a
${\mathbb{Z}}$-action with the ${\mathbb{Z}}^k$-action $\alpha$.

\begin{definition}
\emph{Friedland' s entropy} of a
${\mathbb{Z}}^k$-action $\alpha$ is defined by the topological entropy of the shift
map $\sigma_{\alpha}: X_{\alpha}\rightarrow X_{\alpha}$, i.e.,
\begin{equation}\label{Friedland}
h(\sigma_{\alpha})=\lim_{\varepsilon\rightarrow
0}\limsup_{n\rightarrow\infty}\frac{1}{n}\log
\;s_{\bar{d}}(\sigma_{\alpha},n,\varepsilon, X_{\alpha}),
\end{equation}
where $s_{\bar{d}}(\sigma_{\alpha},n,\varepsilon, X_{\alpha})$ is the largest cardinality of any $(\sigma_{\alpha},n, \varepsilon)$-separated sets of $X_{\alpha}$.
\end{definition}
 Replacing ``$\mathbb{Z}$" by ``$\mathbb{Z}_+$" in the above statement, we can get the definition of Friedland's entropy for
${\mathbb{Z}}_+^k$-actions. Unlike the classical entropy  for $\mathbb{Z}^k$ (or ${\mathbb{Z}}_+^k$)-actions, Friedland's entropy is positive when the generators
 have finite entropy as single transformations. From the known results about Friedland's entropy, we can see that it is not an easy task to compute it, even for some ``simple" examples. In \cite{Friedland}, Friedland conjectured that for a
$\mathbb{Z}_+^2$-action on the circle
$\mathbb{S}^1$ whose generators are
$T_1=px(\mbox{mod } 1)$ and $T_2=qx(\mbox{mod } 1)$, where $p$ and
$q$ are two co-prime integers, its entropy
$$
h(\sigma_{\alpha})=\log(p+q).
$$
Soon afterwards Geller and Pollicott \cite{Geller} answered this
conjecture affirmatively under a weaker condition ``$p, q$ are different
integers greater than 1".
In \cite{Einsiedler} and \cite{Zhu14}, the formulas of Friedland's entropy for the ``linear" $\mathbb{Z}^k$ and ${\mathbb{Z}}_+^k$-actions on the torus via the eigenvalues of the generators are given respectively. We can see that all the known formulas of Friedland's entropy as we mentioned above were obtained for special (``linear") actions on simple manifolds (tori). A natural question is
 \begin{Question 2}
Is there a formula of Friedland's entropy for general smooth $\mathbb{Z}^k$ and ${\mathbb{Z}}_+^k$-actions via the Lyapunov spectra of the generators ?
\end{Question 2}

 In section 4, applying the entropy formula (\ref{entropyformula2}) for random $\mathbb{Z}^k$-actions and use the techniques, especially the variational principle for pressures and Birkhoff's Ergodic Theorem, in \cite{Einsiedler} and \cite{Zhu14}, we give some formulas and bounds of Friedland's entropy for certain $\mathbb{Z}^k$-actions. Hence, the Friedland's entropy formulas in \cite{Geller}, \cite{Einsiedler} and \cite{Zhu14} are all special cases of the following  result.

 \begin{Theorem 2}\label{Thm2}
Let $\alpha:\mathbb{Z}^k\longrightarrow C^2(M, M)$ be a $C^2$ $\mathbb{Z}^k$-action on a  $d$-dimensional closed Riemannian manifold $M$. Let $\mathcal{G}_{\alpha}$ and $\Omega$ be as in (\ref{generator}) and (\ref{Omega}) respectively, and $\Psi$ the skew product transformation as in  (\ref{Psi}). If there is a measure with maximal entropy of $\Psi$ in the form of $\mathbf{P}_{\nu}\times\mu$, where $\mathbf{P}_{\nu}=\nu^{\mathbb{Z}}$ is the product measure of some Borel probability measure $\nu$  on $\mathcal{G}_{\alpha}$ with $\nu_i=\nu(f_i)$ and
$\mu$ is an $\alpha$-invariant measure on $M$ which is absolutely continuous with respect to the Lebesgue measure $m$. Then
\begin{equation}\label{Fried1}
h(\sigma_{\alpha})\le -\sum_{i=1}^k\nu_i\log\nu_i +\int_M\max_{J\subset \{1,\cdots,s(x)\}}\sum_{j\in J}\sum_{i=1}^k \nu_i d_j(x)\lambda_{i,j}(x)d\mu(x).
\end{equation}

Furthermore, if  $\mu$ is $\alpha$-ergodic and for  each pair of generators $f_i$ and $f_j$, $1\le i\neq j\le k$, the set of their  coincidence points, i.e. \emph{Coinc}$(f_i, f_j)$ which is defined by $\{x\in M:f_i(x)=f_j(x)\}$, is of $\mu$ measure zero, then we get the following formula of Friedland's entropy
\begin{equation}\label{Fried3}
h(\sigma_{\alpha})= \displaystyle\max_{J\subset \{1,\cdots,s\}}\log\Big(\sum_{i=1}^k\exp\big(\sum_{j\in J}d_j\lambda_{i,j}\big)\Big).
\end{equation}
\end{Theorem 2}

\section{A measure-theoretic entropy formula of $C^2$ random $\mathbb{Z}^k$-actions}

We first recall some fundamental properties of  $C^2$ $\mathbb{Z}^k$-actions. Let $M$ be a $d$-dimensional closed $C^{\infty}$ Riemannian manifold. We denote by $\langle\!\langle\cdot,\cdot\rangle\!\rangle$,  $\|\cdot\|$ and $d(\cdot,\cdot)$ the inner product, the norm on the tangent spaces and the metric on $M$ induced by the Riemannian metric, respectively.

Let $\alpha:\mathbb{Z}^k\longrightarrow C^2(M, M)$ be a $C^2$ $\mathbb{Z}^k$-action on $M$  with the generators $f_i,1\le i\le k$, as in (\ref{generator}) and $\mu$ be an $\alpha$-invariant measure. By the Multiplicative Ergodic Theorem (\cite{Oseledec}), for each $f_i$ there exists a measurable set $\Gamma_i$ with $f_i(\Gamma_i)=\Gamma_i$ and $\mu(\Gamma_i)=1$, such that
for any $x\in \Gamma_i$, there exist a decomposition
$$
T_xM=\bigoplus_{j=1}^{r(x,f_i)}E_j(x)
$$
into subspaces $E_j(x)$ of dimension $d_j(x,f_i)$ (where
$\sum\limits_{j=1}^{r(x,f_i)}d_j(x,f_i)=d$), and numbers
$\lambda_1(x,f_i)<\cdots<\lambda_{r(x,f_i)}(x,f_i)$ which satisfy the following properties:

(1) for $1\le j\le r(x,f_i), v\in E_j(x)\setminus \{0\}$,
$$
\lim_{n\longrightarrow \pm \infty}\frac{1}{n}\log\|Df^n_i(x)v\|=\lambda_j(x,f_i);
$$

(2) $E_j(x), d_j(x,f_i$) and $\lambda_j(x,f_i)$ all measurably depend on $x$ and the following invariance properties
$$
Df_i(x)E_j(x)=E_j(f_i(x))\;\;\text{and}\;\; \lambda_j(f_i(x),f_i)=\lambda_j(x,f_i),
$$
hold for each $1\le i\le k, 1\le j\le r(x,f_i)$.

The above numbers $\lambda_1(x,f_i),\cdots, \lambda_{r(x,f_i)}(x,f_i)$ are called the \emph{Lyapunov exponents} of $f_i$ at $x$, and the collection $\{(\lambda_j(x,f_i),d_j(x,f_i)): 1\le j\le r(x,f_i), x\in \Gamma_i\}$ is called the \emph{spectrum} of $f_i$. For each $1\le i\le k$ and each $x\in \Gamma_i$, denote
$$
E^s(x, f_i)=\bigoplus_{\lambda_j(x,f_i)<0}E_j(x)\;\text{ and }\;E^u(x,f_i)=\bigoplus_{\lambda_j(x,f_i)>0}E_j(x).
$$

In \cite{Hu}, Hu discussed some ergodic properties of $C^2$ $\mathbb{Z}^2$-actions concerning Lyapunov exponents and entropies. He gave a version of Pesin's theory for this case. More precisely, by the commutativity of the generators, he gave a family of refined decompositions of the above $\{E_j(x)\}$ into subspaces related to the Lyapunov exponents of both of the generators $f_1$ and $f_2$,  then constructed a family of Lyapunov charts and applied them to obtain the subadditivity of the entropies. We first introduce some fundamental properties for $C^2$ $\mathbb{Z}^k$-actions,  they all derive from \cite{Hu}.

\begin{proposition}\label{Lyapunovexponents}
Let $\alpha:\mathbb{Z}^k\longrightarrow C^2(M, M)$ be a $\mathbb{Z}^k$-action on $M$  with the generators $f_i,1\le i\le k$ and $\mu$ be an $\alpha$-invariant measure. Then there exists a measurable set $\Gamma\subset \Gamma_i$ with $f_i(\Gamma)=\Gamma$ for each $i$ (in this case we call $\Gamma$ is $\alpha$-\emph{invariant}) and $\mu(\Gamma)=1$, such that for any $x\in \Gamma$, there exists a decomposition of tangent space into subspaces
$$
T_xM=\bigoplus_{j_1=1}^{r(x,f_1)}\cdots\bigoplus_{j_k=1}^{r(x,f_k)}E_{j_1,\cdots,j_k}(x)
$$
satisfying the following properties:

(1) if $E_{j_1,\cdots,j_k}(x)\neq\{0\}$, then for $0\neq v\in E_{j_1,\cdots,j_k}(x)$ and $1\le i\le k$,
$$
\lim_{n\longrightarrow \pm \infty}\frac{1}{n}\log\|Df_i^n(x)v\|=\lambda_{j_i}(x,f_i);
$$

(2) for each $E_{j_1,\cdots,j_k}(x)$, we have the following invariance properties
$$
Df_i(x)E_{j_1,\cdots,j_k}(x)=E_{j_1,\cdots,j_k}(f_i(x))\;\;\text{and}\;\; \lambda_{j_i}(f_{i'}(x),f_i)=\lambda_{j_i}(x,f_i),
$$
where $1\le i,i'\le k$.
\end{proposition}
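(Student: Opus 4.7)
The plan is to prove the proposition by induction on the number $k$ of generators, refining the single-generator Oseledets decomposition one commuting map at a time. The crucial technical ingredient is a \emph{commutation lemma}: if $f_i$ and $f_{i'}$ commute, then $Df_{i'}(x)$ maps each $f_i$-Oseledets subspace at $x$ onto the $f_i$-Oseledets subspace at $f_{i'}(x)$ of the same exponent. Given this lemma, the rest amounts to iterated application of the Multiplicative Ergodic Theorem on invariant sub-bundles, plus a routine construction of an $\alpha$-invariant full-measure set.

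To prove the commutation lemma, fix $i\neq i'$ and $x\in \Gamma_i\cap \Gamma_{i'}$. Since $\alpha$ is a $\mathbb{Z}^k$-action, $f_{i'}\circ f_i^n=f_i^n\circ f_{i'}$, and the chain rule gives $Df_i^n(f_{i'}(x))\circ Df_{i'}(x)=Df_{i'}(f_i^n(x))\circ Df_i^n(x)$ for all $n\in\mathbb{Z}$. Because $f_{i'}\in C^2(M,M)$ and $M$ is compact, there exists $C_0>0$ with $\|Df_{i'}(y)\|\le C_0$ and $\|Df_{i'}^{-1}(y)\|\le C_0$ for every $y\in M$. Hence for any $0\neq v\in E_j(x,f_i)$,
$$C_0^{-1}\|Df_i^n(x)v\|\le\|Df_i^n(f_{i'}(x))\,Df_{i'}(x)v\|\le C_0\|Df_i^n(x)v\|,$$
so both sides produce the same exponential growth rate $\lambda_j(x,f_i)=\lambda_j(f_{i'}(x),f_i)$ as $n\to\pm\infty$. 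The characterization of Oseledets subspaces then forces $Df_{i'}(x)v\in E_j(f_{i'}(x),f_i)$, and invertibility of $Df_{i'}(x)$ promotes the inclusion to the equality $Df_{i'}(x)E_j(x,f_i)=E_j(f_{i'}(x),f_i)$.

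Next I would iterate. Suppose inductively that for some $1\le\ell<k$ there is an $\alpha$-invariant full-measure set on which we have a decomposition $T_xM=\bigoplus_{(j_1,\ldots,j_\ell)}E_{j_1,\ldots,j_\ell}(x)$ with each nonzero summand having Lyapunov exponent $\lambda_{j_i}(x,f_i)$ under every $f_i$ with $i\le\ell$, and with every $Df_s$, $1\le s\le k$, permuting the summands fiberwise. The base case $\ell=1$ is the MET for $f_1$ together with the commutation lemma. By the inductive hypothesis the bundle $E_{j_1,\ldots,j_\ell}$ is $Df_{\ell+1}$-equivariant, so applying the MET to the restriction of $Df_{\ell+1}$ to this $f_{\ell+1}$-invariant sub-bundle yields a finer splitting $E_{j_1,\ldots,j_\ell}(x)=\bigoplus_{j_{\ell+1}}E_{j_1,\ldots,j_{\ell+1}}(x)$ indexed over $1\le j_{\ell+1}\le r(x,f_{\ell+1})$ (with possibly zero summands when an index is not realized), and with exponents drawn from the full $f_{\ell+1}$-spectrum. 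The commutation lemma applied to $f_{\ell+1}$ against each $f_s$ preserves the refined decomposition under every $Df_s$, closing the induction.

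Finally, to obtain the $\alpha$-invariant full-measure set $\Gamma$ claimed in the statement, let $\tilde\Gamma$ be the intersection of $\bigcap_{i=1}^k\Gamma_i$ with all MET full-measure sets produced at the $k$ inductive stages, and set
$$\Gamma:=\bigcap_{(n_1,\ldots,n_k)\in\mathbb{Z}^k}(f_1^{n_1}\circ\cdots\circ f_k^{n_k})^{-1}(\tilde\Gamma).$$
This is a countable intersection of sets of full $\mu$-measure (using that each $f_i$ preserves $\mu$), so $\mu(\Gamma)=1$, and by construction $f_i(\Gamma)=\Gamma$ for every $i$. The main obstacle is the commutation lemma; once that is established, everything else is careful but standard bookkeeping of Oseledets decompositions on invariant sub-bundles.
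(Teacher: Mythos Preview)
The paper does not actually prove this proposition: it is stated without argument and attributed to Hu's paper on commuting diffeomorphisms (the sentence ``they all derive from \cite{Hu}'' is the entire justification). Your proposal supplies a correct proof along the natural line one would expect, and indeed along the line Hu follows for $k=2$: the key point is the commutation lemma showing that $Df_{i'}(x)$ carries each $f_i$-Oseledets subspace at $x$ onto the subspace of the same exponent at $f_{i'}(x)$, after which one refines the splitting one generator at a time by applying the Multiplicative Ergodic Theorem to the restricted cocycle on each invariant measurable sub-bundle. Your observation that the refined summand is simply the intersection $E_{j_1,\ldots,j_\ell}(x)\cap E_{j_{\ell+1}}(x,f_{\ell+1})$ (because the restricted and ambient cocycles give the same two-sided exponent for any vector in the sub-bundle) is exactly what makes the invariance under all $Df_s$ immediate from the commutation lemma plus the inductive hypothesis.

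The only place to tighten the exposition is the order of operations with the full-measure sets: as written, your commutation lemma assumes $f_{i'}(x)\in\Gamma_i$, which does not follow from $x\in\Gamma_i\cap\Gamma_{i'}$ alone. You resolve this at the end by saturating $\tilde\Gamma$ under the full $\mathbb{Z}^k$-orbit, but for the inductive argument to be literally valid at each stage you should perform that saturation \emph{before} invoking the commutation lemma---i.e., first pass to the $\alpha$-invariant full-measure set $\bigcap_{\vec n\in\mathbb{Z}^k}\alpha(\vec n)^{-1}\bigl(\bigcap_i\Gamma_i\bigr)$, and then run the induction entirely inside it. This is precisely the ``careful but standard bookkeeping'' you flag, so there is no genuine gap.
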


Notice that the subspaces $E_{j_1,\cdots,j_k}(x)$, $1\le j_i\le r(x,f_i), 1\le i\le k$, of $T_xM$ may not be pairwise different, and the Lyapunov exponents of $f_i$ with respect to different subspaces may be coincide. For simplicity of the notations, we relabel these subspaces by $F_j(x), 1\le j\le s(x)$, such that
$$
T_xM=\bigoplus_{j=1}^{s(x)} F_j(x)
$$
in which $F_j(x)\neq\{0\}$ for each $1\le j\le s(x)$, and rename the corresponding Lyapunov exponents of $f_i$ with respect to $F_j(x)$  by $\lambda_{i,j}(x)$. If denote $d_j(x)=\dim F_j(x)$, then clearly $\sum_{j=1}^{s(x)}d_j(x)=d$. Therefore, the items (1) and (2) in Proposition \ref{Lyapunovexponents} become

(1$'$) for $0\neq v\in F_j(x), 1\le j\le s(x)$,
$$
\lim_{n\longrightarrow \pm \infty}\frac{1}{n}\log\|Df_i^n(x)v\|=\lambda_{i,j}(x);
$$
and

(2$'$)  for each $F_j(x)$, we have the following invariance properties
$$
Df_i(x)F_j(x)=F_j(f_i(x))\;\;\text{and}\;\; \lambda_{i,j}(f_{i'}(x))=\lambda_{i,j}(x),
$$
where $1\le i,i'\le k$.

\begin{definition}\label{spectrum}
We call the collection
$$
\{(\lambda_{i,j}(x),d_j(x)): 1\le i\le k, 1\le j\le s(x), x\in \Gamma\}
$$
 the \emph{spectrum} of $\alpha$.
 \end{definition}
 When the measure $\mu$ in Proposition \ref{Lyapunovexponents} is ergodic with respect to $\alpha$, then the above $\lambda_{i,j}(x),d_j(x)$ and $s(x)$ are all constant a.s., which are then denoted by $\lambda_{i,j},d_j$ and $s$ respectively.

Before starting to prove Theorem 1, we first give some remarks and examples about it.

\begin{remark}\label{Pesin}
Let $\alpha:\mathbb{Z}^k\longrightarrow C^2(M, M)$ be a $C^2$ $\mathbb{Z}^k$-action on $M$ and $\nu$ a probability measure on $\mathcal{G}_{\alpha}$. Let $f$ be the induced random $\mathbb{Z}^k$-action over $(\Omega, \mathcal{A}, \mathbf{P}_{\nu}, \sigma)$ and $\mu$ a $f$-invariant measure which is absolutely continuous with respect to the Lebesgue measure $m$.

(1)  When $k=1$,  $f$ is no longer a random system but a deterministic system generated by a single diffeomorphism, and  (\ref{entropyformula1}) is exactly the well-known Pesin's entropy formula as in \cite{Pesin},
$$
h_{\mu}(f)=\int_M \sum_{\{\lambda_j(x):\lambda_j(x)>0\}}\lambda_j(x)d_j(x)d\mu(x),
$$
where
$\{(\lambda_j(x),d_j(x))\}$ is the Lyapunov spectrum  of $f$.

(2) From the formula (\ref{entropyformula1}), we can give the relation between the entropy of $f$ and those of the generators as follows
\begin{equation}\label{entropyrelation}
h_{\mu}(f)\le\sum_{i=1}^k\nu_ih_{\mu}(f_i).
\end{equation}
Particularly, if
$$
E^s(x, f_i)\cap E^u(x, f_j)=\{0\}, \mu-a.e. x \;\text{ for all } 1\le i, j\le k,
$$
then the equality in (\ref{entropyrelation}) holds.  Moreover, the equality in (\ref{entropyrelation}) also holds in this particular case  whenever $\mu$ is an ``SRB" measure, i.e., it has smooth conditional
measures on the unstable manifolds of $f$.

(3) When $\mu$ is $\alpha$-ergodic, (\ref{entropyformula2}) can be regarded as a function of the distribution $\nu$ on the set of generators of $\alpha$. Precisely, let $\mathbb{S}^{k-1}=\{(\nu_1,\cdots,\nu_k):\sum_{t=1}^k\nu_1^2=1\}$ be the $k-1$-dimensional unit sphere and denote
$\mathbb{S}^{k-1}_*=\{(\nu_1,\cdots,\nu_k)\in\mathbb{S}^{k-1}:\nu_i> 0, 1\le i\le k\}$. we can define a function $h_{\mu}(f)$ on $\mathbb{S}^{k-1}_*$ by
$$
h_{\mu}(f)((\nu_1,\cdots,\nu_k))=\max_{J\subset \{1,\cdots,s\}}\sum_{j\in J}\sum_{i=1}^k \nu_i d_j\lambda_{i,j}.
$$
Hence applying the theory of conditional extremum, we can get the extremum of the entropy of $f$, and from the corresponding distributions we can identify the contributions of different generators to the complexity of the action $f$.
\end{remark}

\begin{example}\label{example}
Let $\alpha$ be a ${\mathbb{Z}}^k$-action on the torus
$X=\mathbb{T}^d$ with the generators $\{f_i\}_{i=1}^k$ which are induced by non-singular integer  matrices $\{A_i\}_{i=1}^k$. Since $A_iA_j=A_jA_i$ for any $1\leq i, j\leq k$, then by Theorem A of \cite{Hu}, we can write $\mathbb{R}^d$ as a direct sum of subspaces $V_1\bigoplus \cdots\bigoplus V_s$ which are all invariant under each $A_i, 1\leq i\leq k$, and moreover, for each $1\leq i\leq k$ and $1\leq j\leq s$, the eigenvalues of $A_{i,j}=A_i|_{V_j}$ all have the same norm. For each $1\leq j\leq s$ denote $d_j=\dim V_j$ and $\lambda_{i,j}$ the common norm of the eigenvalues of $A_{i,j}$. In fact, the collection $\{(\log\lambda_{i,j},d_j): 1\le i\le k, 1\le j\le s\}$ is exactly the spectrum of $\alpha$. Therefore, by Theorem 1, for any probability measure $\nu$ on $\mathcal{G}_{\alpha}$ and the corresponding random ${\mathbb{Z}}^k$-action $f$ over $(\Omega, \mathcal{A}, \mathbf{P}_{\nu}, \sigma)$, we have
\begin{equation}\label{entropytorus}
 h(f)=h_{m}(f)=\max_{J\subset \{1,\cdots,s\}}\sum_{j\in J} \sum_{i=1}^k\nu_i d_j \log\lambda_{i,j}
\end{equation}
(recall that $m$ is the Lebesgue measure).

In a particular case, for the ${\mathbb{Z}}^2$-action $\alpha$ on the torus
$\mathbb{T}^2$ with the generators $\{f_1,f_2\}$ which are induced by the hyperbolic automorphisms
$$
A_1=\left(
         \begin{array}{rr}
              2 &1\\
              1 &1
          \end{array} \right)\;\; \mbox{and}\;\;
          A_2=A_1^{-1}=\left(
         \begin{array}{rr}
              1 &-1\\
              -1 &2
          \end{array} \right),
$$
 respectively, and the induced random ${\mathbb{Z}}^2$-action $f$ over $(\Omega, \mathcal{A}, \mathbf{P}_{\nu}, \sigma)$, we have
$$
h(f)=|\nu_1-\nu_2|\log\frac{3+\sqrt{5}}{2},
$$
where $\nu_i=\nu(f_i)$.
\end{example}

In the remaining of this section we will prove the main theorem, i.e., Theorem 1, of this paper. We always assume that $\alpha:\mathbb{Z}^k\longrightarrow C^2(M, M)$ is a $C^2$ $\mathbb{Z}^k$-action on $M$, $\nu$ is a probability measure on $\mathcal{G}_{\alpha}$ with $\nu_i=\nu(f_i)$, $f$ is the induced random $\mathbb{Z}^k$-action over $(\Omega, \mathcal{A}, \mathbf{P}_{\nu}, \sigma)$ and $\mu$ is an $\alpha$-invariant measure which is absolutely continuous with respect to the Lebesgue measure $m$.

It is well known that for any $C^2$ diffeomorphism $g$ on $M$ and any invariant measure $\mu$ which is absolutely continuous with respect to $m$, we have the so-called Pesin's entropy formula
 $$
 h_{\mu}(g)=\int_M\sum_{\lambda_j(x)>0}d_j(x)\lambda_j(x)d\mu(x),
 $$
 where $\{(\lambda_j(x),d_j(x))\}$ is the spectrum of $g$. For the proof of the inequality
 $$
  h_{\mu}(g)\leq\int_M\sum_{\lambda_j(x)>0}d_j(x)\lambda_j(x)d\mu(x),
 $$
 which is called Ruelle's inequality, we refer to Theorem S2.13 of \cite{Katok}, and for the proof of the reverse inequality we refer to section 13 of \cite{Mane1}. We will adapt the methods in \cite{Katok} and \cite{Mane1} to get the desired entropy formula (\ref{entropyformula1}) for random $\mathbb{Z}^k$-actions.

\begin{lemma}[Lemma 1 of \cite{Bahnmuller0}]\label{powerrule}
 (1) Define $f^n$ by $(f^n)_{\omega}^i=f^{ni}_{\omega}, i\in \mathbb{Z}, \omega\in \Omega$, for $n\in \mathbb{N}$ (here $\mathbb{N}=\{1,2,\cdots\}$). Then for all
$n\in \mathbb{N}$ one has
\begin{equation}\label{powerrule1}
 h_{\mu}(f^n)=nh_{\mu}(f).
\end{equation}

(2)  If $\{\mathcal{Q}_p\}$ is a sequence of finite partitions of $M$ with $\lim_{p\rightarrow +\infty} diam \mathcal{Q}_p=0$, then
\begin{equation}\label{limit}
h_{\mu}(f)=\lim_{p\rightarrow +\infty}h_{\mu}(f, \mathcal{Q}_p).
\end{equation}
\end{lemma}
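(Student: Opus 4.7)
The plan is to mirror, in the random $\mathbb{Z}^k$-setting, the classical proofs of the power rule and the Kolmogorov--Sinai generator theorem, bookkeeping the extra integration over $\Omega$ and the cocycle relation $f_\omega^{nj+l}=f_{\sigma^{nj}\omega}^l\circ f_\omega^{nj}$ throughout.

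For part (1), I would first verify the easy direction $h_\mu(f^n)\le n\,h_\mu(f)$ by starting from a finite partition $\mathcal{Q}$ of $M$ (regarded via Remark \ref{finitepartition} as a fiber-constant partition of $\Omega\times M$) and using the coarseness relation
\[
\bigvee_{j=0}^{m-1}(f_\omega^{nj})^{-1}\mathcal{Q}\;\preceq\;\bigvee_{i=0}^{mn-1}(f_\omega^i)^{-1}\mathcal{Q}.
\]
Monotonicity of $H_\mu$ under refinement, integration against $\mathbf{P}_\nu$, and division by $m$ versus $mn$ yield $h_\mu(f^n,\mathcal{Q})\le n\,h_\mu(f,\mathcal{Q})$; taking a supremum over $\mathcal{Q}$ gives the bound. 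For the reverse direction, I would use the cocycle identity to rewrite
\[
\bigvee_{i=0}^{mn-1}(f_\omega^i)^{-1}\mathcal{Q}=\bigvee_{j=0}^{m-1}(f_\omega^{nj})^{-1}\mathcal{R}(\sigma^{nj}\omega),\qquad \mathcal{R}(\omega):=\bigvee_{l=0}^{n-1}(f_\omega^l)^{-1}\mathcal{Q},
\]
package the family $\{\mathcal{R}(\omega)\}_{\omega\in\Omega}$ into a measurable (though $\omega$-dependent) partition $\mathcal{R}$ of $\Omega\times M$, and identify the limit of the right-hand side with $h_\mu(f^n,\mathcal{R})\le h_\mu(f^n)$. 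Multiplying by $n$, dividing, and taking a supremum over $\mathcal{Q}$ then produces $n\,h_\mu(f)\le h_\mu(f^n)$.

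For part (2), my approach is the generator argument adapted fiberwise. Since $\mu$ is absolutely continuous with respect to $m$, the finite partitions $\mathcal{Q}$ of $M$ with $\mu(\partial\mathcal{Q})=0$ form an entropy-dense subclass, so by Remark \ref{finitepartition} it suffices to show $h_\mu(f,\mathcal{Q})\le\liminf_p h_\mu(f,\mathcal{Q}_p)$ for every such $\mathcal{Q}$. For fixed $\varepsilon>0$, the hypothesis $\mathrm{diam}\,\mathcal{Q}_p\to 0$ together with outer regularity of $\mu$ furnishes a $p_0$ with $H_\mu(\mathcal{Q}\mid\mathcal{Q}_p)<\varepsilon$ for all $p\ge p_0$. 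The classical fiberwise inequality
\[
H_\mu\Big(\bigvee_{i=0}^{n-1}(f_\omega^i)^{-1}\mathcal{Q}\Big)\le H_\mu\Big(\bigvee_{i=0}^{n-1}(f_\omega^i)^{-1}\mathcal{Q}_p\Big)+n\,H_\mu(\mathcal{Q}\mid\mathcal{Q}_p)
\]
then holds for every $\omega$; dividing by $n$, integrating against $\mathbf{P}_\nu$, and letting $n\to\infty$ gives $h_\mu(f,\mathcal{Q})\le h_\mu(f,\mathcal{Q}_p)+\varepsilon$. Taking the supremum over $\mathcal{Q}$ and then letting $\varepsilon\to 0$ yields the claimed limit, while the trivial bound $h_\mu(f,\mathcal{Q}_p)\le h_\mu(f)$ provides the opposite inequality.

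The step I expect to be most delicate is the partition bookkeeping in part (1): turning $\mathcal{R}(\omega)$ into an honest measurable partition of $\Omega\times M$ and justifying that its random entropy really equals the grouped limit above, because $\mathcal{R}$ is not fiber-constant and so lies outside the simpler class used in Remark \ref{finitepartition}. Nothing requires a new idea beyond the deterministic templates (power rule and Kolmogorov--Sinai), but each entropy inequality must be verified fiberwise and then integrated against $\mathbf{P}_\nu$; the ergodic theorem applied to the $\sigma$-invariant integrand $\omega\mapsto H_\mu(\bigvee_{i=0}^{n-1}(f_\omega^i)^{-1}\mathcal{P}_\omega)$ ensures that the limits exist and can be moved past the integral as needed.
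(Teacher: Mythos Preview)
The paper does not supply its own proof of this lemma; it is quoted directly from Bahnm\"uller--Bogensch\"utz \cite{Bahnmuller0}, so there is no argument in the text to compare against. Your proposal reproduces the standard deterministic proofs (power rule and Kolmogorov--Sinai) with the random bookkeeping added, and the outline is sound in the paper's setting. The measurability of the $\omega$-dependent partition $\mathcal{R}$ in part~(1) is not a real obstacle here: since $\mathcal{G}_\alpha$ is finite, $\mathcal{R}(\omega)$ depends only on the first $n$ coordinates of $\omega$, so $\mathcal{R}$ is an honest finite measurable partition of $\Omega\times M$ and Definition~1.1 applies verbatim.

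Two small points deserve mention. In part~(2), your fiberwise bound with error term $n\,H_\mu(\mathcal{Q}\mid\mathcal{Q}_p)$ tacitly uses $H_\mu\bigl((f_\omega^i)^{-1}\mathcal{Q}\mid(f_\omega^i)^{-1}\mathcal{Q}_p\bigr)=H_\mu(\mathcal{Q}\mid\mathcal{Q}_p)$, i.e.\ that $\mu$ is invariant under each individual composition $f_\omega^i$; this holds under the paper's standing hypothesis that $\mu$ is $\alpha$-invariant, but in the general random framework of \cite{Bahnmuller0} (where $\mu$ is only invariant on average) one must integrate first and invoke $\Psi$-invariance of $\mathbf{P}_\nu\times\mu$ instead. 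Second, your appeal to absolute continuity of $\mu$ is unnecessary: the estimate $H_\mu(\mathcal{Q}\mid\mathcal{Q}_p)<\varepsilon$ for large $p$ follows from inner regularity of any Borel probability measure on a compact metric space, so there is no need to restrict to partitions with null boundary.
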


Suppose $\varepsilon\in (0,1)$ and $\rho_{\varepsilon}:\Omega \times M\longrightarrow (0, \epsilon)$ is a measurable function. Given $\omega\in \Omega, x\in M$ and $n>0$, define two subsets by
 \begin{equation}\label{Bowenepsball}
B_n(\omega, \varepsilon, x)=\{y\in M:d_{\omega}^n(x,y)\leq \varepsilon\}
\end{equation}
and
 \begin{equation}\label{Bowenrhoball}
B_n(\omega, \rho_{\varepsilon}, x)=\{y\in M:d(f_{\omega}^i(x),f_{\omega}^i(y))\leq \rho_{\varepsilon}(\sigma^i\omega , f_{\omega}^i(x)), 0\le i\le n-1\}.
\end{equation}

\begin{lemma}\label{localentropy}
 Let $\mathcal{H}=\{\rho_{\varepsilon}:\varepsilon\in (0,1)\}$ be a family of measurable functions on $\Omega \times M$ such that for any $\omega\in\Omega$, $\rho_{\varepsilon}(\omega,\cdot)$ monotonically decreases as $\varepsilon\longrightarrow 0$. Then
\begin{equation}\label{hmu}
h_{\mu}(f)\geq\int_M \lim_{\varepsilon\longrightarrow 0}\limsup_{n\rightarrow\infty} -\frac{1}{n}\log m(B_n(\omega, \rho_{\varepsilon},x))d \mu(x), \;\;\mathbf{P}_{\nu}-a.e. \; \omega\in \Omega.
\end{equation}
\end{lemma}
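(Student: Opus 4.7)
The plan is to adapt the Brin--Katok lower bound to the random skew-product setting by combining a random Shannon--McMillan--Breiman theorem with a comparison between atoms of refined pullback partitions and Bowen balls of variable radius $\rho_\varepsilon$, and then converting $\mu$-measures of Bowen balls into $m$-measures via the absolute continuity $\mu\ll m$.

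First, I would choose a sequence of finite Borel partitions $\{\mathcal{Q}_p\}_{p\ge 1}$ of $M$ with $\mu(\partial\mathcal{Q}_p)=0$ and $\mathrm{diam}(\mathcal{Q}_p)\to 0$. By Lemma \ref{powerrule}(2), $h_\mu(f)=\lim_{p\to\infty}h_\mu(f,\mathcal{Q}_p)$, so it suffices to bound each $h_\mu(f,\mathcal{Q}_p)$. The random Shannon--McMillan--Breiman theorem, which applies since $(\Omega,\mathbf{P}_\nu,\sigma)$ is invertible and ergodic and $H_\mu(\mathcal{Q}_p)<\infty$, yields, for $\mu\otimes\mathbf{P}_\nu$-a.e. $(x,\omega)$,
$$
\lim_{n\to\infty}-\tfrac{1}{n}\log\mu\bigl(\mathcal{Q}_{p,\omega}^n(x)\bigr)=h_p(x,\omega),\qquad \int h_p\,d\mu\,d\mathbf{P}_\nu=h_\mu(f,\mathcal{Q}_p),
$$
where $\mathcal{Q}_{p,\omega}^n(x)$ is the atom of $\bigvee_{i=0}^{n-1}(f_\omega^i)^{-1}\mathcal{Q}_p$ through $x$. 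The key inclusion to exploit is: whenever the atom of $\mathcal{Q}_p$ containing $f_\omega^i(x)$ has diameter at most $\rho_\varepsilon(\sigma^i\omega,f_\omega^i x)$ for every $0\le i\le n-1$, one has $\mathcal{Q}_{p,\omega}^n(x)\subset B_n(\omega,\rho_\varepsilon,x)$, hence $\mu(\mathcal{Q}_{p,\omega}^n(x))\le\mu(B_n(\omega,\rho_\varepsilon,x))$.

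The main obstacle is that $\rho_\varepsilon$ may be arbitrarily small along an orbit, while any fixed finite partition has a positive minimum atom diameter, so the diameter condition fails at some iterates. I would resolve this with a Birkhoff frequency argument applied to the $\Psi$-invariant product measure $\mathbf{P}_\nu\otimes\mu$: for each $k$, set $E_k=\{(\omega,y):\rho_\varepsilon(\omega,y)<1/k\}$; since $\rho_\varepsilon>0$, one has $(\mathbf{P}_\nu\otimes\mu)(E_k)\to 0$ as $k\to\infty$. Choosing $p=p(k)$ with $\mathrm{diam}(\mathcal{Q}_p)\le 1/k$ and applying Birkhoff's ergodic theorem to $\mathbf{1}_{E_k}\circ\Psi^i$, one restricts attention to the ``good'' iterates $\{i:(\sigma^i\omega,f_\omega^i x)\notin E_k\}$, whose asymptotic density is $1-(\mathbf{P}_\nu\otimes\mu)(E_k)$. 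The atom refined only along these iterates still lies in $B_n(\omega,\rho_\varepsilon,x)$, and differs from $\mathcal{Q}_{p,\omega}^n(x)$ by a conditional-entropy defect of order $\mu(E_k)\log\#\mathcal{Q}_p+\mu(E_k)|\log\mu(E_k)|$, which vanishes as $k\to\infty$. This coordination of the random SMB with the $\sigma$-ergodic averaging is the most delicate step.

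The final passage from $\mu$ to $m$ uses $\phi=d\mu/dm$. Truncating to $A_K=\{1/K\le\phi\le K\}$ (with $\mu(A_K)\to 1$) and applying Lebesgue differentiation at $\mu$-a.e.\ $x\in A_K$, one obtains, for $\varepsilon$ small enough that the Bowen ball sits in a sufficiently small ball where the density estimate is effective, the comparison $\mu(B_n(\omega,\rho_\varepsilon,x))\le (K+o(1))\,m(B_n(\omega,\rho_\varepsilon,x))$, whence
$$
\limsup_{n\to\infty}-\tfrac{1}{n}\log\mu(B_n(\omega,\rho_\varepsilon,x))\ge\limsup_{n\to\infty}-\tfrac{1}{n}\log m(B_n(\omega,\rho_\varepsilon,x))
$$
for $\mu$-a.e.\ $x\in A_K$. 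Sending $n\to\infty$, $k\to\infty$, $\varepsilon\to 0$, $K\to\infty$ and $p\to\infty$ in that order, and integrating via Fatou and monotone convergence, yields the claimed lower bound.
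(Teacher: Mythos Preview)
Your key inclusion is stated the wrong way. The set you call ``the atom refined only along the good iterates'' is \emph{larger} than the full atom $\mathcal{Q}_{p,\omega}^n(x)$, and at a bad iterate $i$ (where $\rho_\varepsilon(\sigma^i\omega,f_\omega^i x)<1/k$) it imposes no constraint whatsoever on $f_\omega^i(y)$; hence there is no reason for $d(f_\omega^i x,f_\omega^i y)\le\rho_\varepsilon(\sigma^i\omega,f_\omega^i x)$ to hold, and the claimed containment in $B_n(\omega,\rho_\varepsilon,x)$ fails. The Birkhoff frequency / conditional--entropy defect argument controls the \emph{measure} discrepancy between the partial and full atoms, but neither of these sets sits inside the variable-radius Bowen ball, so the comparison with $m(B_n(\omega,\rho_\varepsilon,x))$ never gets off the ground. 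No finite partition can be subordinate to a function $\rho_\varepsilon$ that may be arbitrarily small along orbits.

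The paper avoids this obstruction entirely by using, for each fixed $\varepsilon$, a \emph{countable} partition $\mathcal{P}$ of $\Omega\times M$ with $H_\mu(\mathcal{P})<\infty$ and $\mathrm{diam}\,\mathcal{P}_\omega(x)\le\rho_\varepsilon(\omega,x)$ for $\mathbf{P}_\nu\times\mu$-a.e.\ $(\omega,x)$ (the construction is adapted from Ma\~n\'e, Theorem~13.1). With such a partition the inclusion $\mathcal{P}_{\omega,n}(x)\subset B_n(\omega,\rho_\varepsilon,x)$ holds at \emph{every} iterate, with no exceptional set, and the random Shannon--McMillan--Breiman theorem applied to $\mathcal{P}$ gives the bound directly. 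The passage from $\mu$ to $m$ is also handled differently: rather than truncating the density and invoking Lebesgue differentiation (which is delicate here since Bowen balls are not metric balls), the paper uses the $\alpha$-invariance of $\mu$ together with $\mu\ll m$ to obtain $\limsup_n -\tfrac{1}{n}\log\mu(\mathcal{P}_{\omega,n}(x))=\limsup_n -\tfrac{1}{n}\log m(\mathcal{P}_{\omega,n}(x))$ almost everywhere, again following Ma\~n\'e.
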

\begin{proof}
Let $\Psi:\Omega\times M\longrightarrow \Omega\times M$ be the induced skew product transformation which is defined by $\Psi(\omega,x)=(\sigma\omega,f_{\omega}(x))$ as in (\ref{Psi}). Clearly, $\mathbf{P}_{\nu}\times\mu$ is an invariant measure of $\Psi$.
Given a finite  or countable partition $\mathcal{P}$ of $\Omega\times M$, for $(\omega, x)\in \Omega\times M$ and $n\ge 0$ denote $\mathcal{P}_{\omega,n}(x)$ the element of $\mathcal{P}_{\omega,n}=\bigvee_{i=0}^{n-1}(f_{\omega}^i)^{-1}\mathcal{P_{\omega}}
$ that contains $x$, specially, denote $\mathcal{P}_{\omega}(x)=\mathcal{P}_{\omega,0}(x)$.
By the Shannon-McMillan-Breiman Theorem for random dynamical systems (see Proposition 2.1 of \cite{Zhu08} and Theorem 1.14 of \cite{Kifer2}, for example), we have
\begin{equation}\label{SMB}
h_{\mu}(f,\mathcal{P})=\int_M \limsup_{n\rightarrow\infty} -\frac{1}{n}\log \mu\big(\mathcal{P}_{\omega,n}(x)\big)d\mu(x),\;\;\mathbf{P}_{\nu}-a.e. \; \omega\in \Omega.
\end{equation}

Given $\varepsilon>0$ and $\rho_{\varepsilon}\in \mathcal{H}$, adapting the proof of Theorem 13.1 in \cite{Mane1} we can take a countable  partition $\mathcal{P}$ such that diam$\mathcal{P}_{\omega}(x)\le \rho_{\varepsilon}(\omega,x)$ for $\mathbf{P}_{\nu}\times\mu$-a.e. $(\omega,x)$. Moreover,
since $\mu$ is $\alpha$-invariant and is absolutely continuous with respect to the Lebesgue measure $m$,
\begin{equation}\label{mtomu1}
\limsup_{n\rightarrow\infty} -\frac{1}{n}\log\mu\big(\mathcal{P}_{\omega,n}(x)\big)=\limsup_{n\rightarrow\infty} -\frac{1}{n}\log m\big(\mathcal{P}_{\omega,n}(x)\big), \;\;\mathbf{P}_{\nu}\times\mu-a.e. (\omega,x).
\end{equation}
Hence by (\ref{SMB}) and (\ref{mtomu1}),
\begin{eqnarray*}
h_{\mu}(f)\ge h_{\mu}(f,\mathcal{P})&=&\int_M \limsup_{n\rightarrow\infty} -\frac{1}{n}\log \mu\big(\mathcal{P}_{\omega,n}(x)\big)d\mu(x)\\
&=&\int_M \limsup_{n\rightarrow\infty} -\frac{1}{n}\log m\big(\mathcal{P}_{\omega,n}(x)\big)d\mu(x)\\
&\ge&\int_M \limsup_{n\rightarrow\infty} -\frac{1}{n}\log m(B_n(\omega, \rho_{\varepsilon},x))d \mu(x)
\end{eqnarray*}
for $\mathbf{P}_{\nu}-a.e. \; \omega\in \Omega$.

Since $\varepsilon>0$ is arbitrary, (\ref{hmu}) holds.
\end{proof}

In order to evaluate $h_{\mu}(f, \mathcal{Q}_p)$ that in Lemma \ref{powerrule} and $m(B_n(\omega, \rho_{\varepsilon},x))$ that in Lemma \ref{localentropy}, we use the Lyapunov charts of the system. The following properties about Lyapunov charts derive from \cite{Hu}.

 Recall that for each generator $f_i$ of $\alpha$, $\{(\lambda_j(x,f_i),d_j(x,f_i)): 1\le j\le r(x,f_i), x\in \Gamma_i\}$ is its spectrum, and $\{(\lambda_{i,j}(x),d_j(x)): 1\le j\le s(x), x\in \Gamma\}$ is the spectrum of $\alpha$. Given $x\in \Gamma$, for $1\le i\le k$ let
$$
\lambda_+(x,f_i)=\min_{1\le j\le r(x,f_i)} \{\lambda_j(x,f_i):\lambda_j(x,f_i)>0\},
$$
$$
\lambda_-(x,f_i)=\max_{1\le j\le r(x,f_i)} \{\lambda_j(x,f_i):\lambda_j(x,f_i)<0\}
$$
and
$$
\Delta\lambda(x,f_i)=\min_{1\le j\le r(x,f_i)-1}\{\lambda_{j+1}(x,f_i)-\lambda_j(x,f_i)\}.
$$
Take
\begin{equation}\label{gamma}
0<\gamma(x)\le\frac{1}{100d}\min\{\lambda_+(x,f_i),-\lambda_-(x,f_i),\Delta\lambda(x,f_i):1\le i\le k\}.
\end{equation}
Clearly, the function $\gamma:\Gamma\longrightarrow (0,+\infty)$ defined as above can be chosen to be measurable and be invariant under $\alpha$, i.e., $\gamma(f_i(x))=\gamma(x)$ for any $1\le i\le k$.

 Let $\langle\cdot,\cdot\rangle$ and $|\cdot,\cdot|$ denote the usual inner product and norm in $\mathbb{R}^d$ respectively. Also, for $r>0$, let $\tilde{B}(r)$ be the ball in $\mathbb{R}^d$ centered at origin of radius $r$.

\begin{proposition}\label{Lyapunovcharts}
For a function $\gamma$
 defined as above, there exists a measurable function $\ell:\Gamma\longrightarrow [1,+\infty)$
 with $\ell(f_i^{\pm1}(x))\le \ell(x)e^{\gamma(x)}$, and a set of embeddings $\Phi_x : \tilde{B} (\ell(x)^{-1}) \longrightarrow M$
at each point $x\in \Gamma$ such that the following properties hold.

(1)  $\Phi_x(0)=x$, and the preimages $R_j(x)=D\Phi_x(0)^{-1}F_j(x)$ are mutually
orthogonal in $\mathbb{R}^d$, where $1\le j\le s(x)$.

(2) Let $\tilde{f}_{i,x} =\Phi_{f_i(x)}^{-1}\circ f_i \circ \Phi_x$ be the connecting map between the charts at $x$ and $f_i(x)$. For any $1\le j\le s(x)$ and $u\in R_j(x)$,
\begin{equation}\label{hyperbolicity}
|u|e^{\lambda_{i,j}(x)-\gamma(x)}\le|D\tilde{f}_{i,x}(0)u|\le |u|e^{\lambda_{i,j}(x)+\gamma(x)}.
\end{equation}

(3) Let $L(g)$ be the Lipschitz constant of a function $g$, then for each $1\le i\le k$,
$$
L(D(\tilde f_{i,x})), L(D(\tilde f_{i,x}^{-1}))\le \ell(x)
$$
and
$$
L\bigl(\tilde f_{i,x} - D\tilde f_{i,x}(0)\bigr), \
L\bigl(\tilde f_{i,x}^{-1} - D\tilde f_{i,x}^{-1}(0)\bigr) \le e^{\gamma(x)}
$$
on the corresponding domains.

(4) There exists $\lambda(x)$ depending on $\gamma(x)$ and the exponents (hence $\lambda$ is invariant under $\alpha$, i.e.,
$\lambda(f_i(x))=\lambda(x)$ for any $1\le i\le k$)  such that for each $i$,
$$
|\tilde{f}_{i,x}^{\pm1}u|\le e^{\lambda(x)}|u|, \;\;\;\;\forall u\in \tilde{B}( e^{-\lambda(x)-\gamma(x)}\ell(x)^{-1}),
$$
and hence for any $0<\varepsilon<e^{-\gamma(x)}$,
\begin{equation}\label{maptochart}
\tilde{f}_{i,x}\tilde{B}( \varepsilon e^{-\lambda(x)}\ell(x)^{-1})\subset \tilde{B}( \varepsilon \ell(f_ix)^{-1}).
\end{equation}

(5) For all $u, v \in \tilde{B}(\ell(x)^{-1})$, we have
\begin{equation}\label{metricsrelation}
K^{-1}d(\Phi_x(u), \Phi_x(v))\le |u-v|\le \ell(x)d(\Phi_x(u), \Phi_x(v)),
\end{equation}
for some universal constant $K\geq 2d$.
\end{proposition}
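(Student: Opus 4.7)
The plan is to adapt the classical construction of Pesin Lyapunov charts for a single $C^{2}$ diffeomorphism (cf.\ Supplement of \cite{Katok} or Chapter IV of \cite{Mane1}) to the commuting setting, taking as input the simultaneous Oseledets decomposition $T_xM=\bigoplus_{j=1}^{s(x)}F_j(x)$ from Proposition~\ref{Lyapunovexponents}. Once a single inner product on $T_xM$ tempered with respect to all $k$ generators $f_{1},\dots,f_{k}$ has been constructed, the chart $\Phi_x$ is produced as the composition of an isometric linear change of coordinates with the Riemannian exponential map, and properties (1)--(5) then reduce to routine estimates.

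\emph{Construction of the Lyapunov inner product.} On each subspace $F_j(x)$, set
$$
\langle u,v\rangle'_{x,j}:=\sum_{i=1}^{k}\sum_{n\in\mathbb{Z}} e^{-2n\lambda_{i,j}(x)-2\gamma(x)|n|}\,\langle\!\langle Df_i^{n}(x)u,\,Df_i^{n}(x)v\rangle\!\rangle.
$$
Commutativity and property (2$'$) give $Df_i^{n}(x)F_j(x)=F_j(f_i^{n}x)$, so each summand is well defined, and the Multiplicative Ergodic Theorem (applied with $\epsilon=\gamma(x)/2$) guarantees geometric convergence of every inner series. Declare the $F_j(x)$ mutually orthogonal for $\langle\cdot,\cdot\rangle'_x$; this extends to an inner product on $T_xM$ with associated norm $\|\cdot\|'_x$.

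\emph{Charts and properties (1), (2), (5).} Let $\iota_x:\mathbb{R}^d\to T_xM$ be a linear isometry from $(\mathbb{R}^d,\langle\cdot,\cdot\rangle)$ to $(T_xM,\langle\cdot,\cdot\rangle'_x)$ carrying the standard orthogonal splitting of $\mathbb{R}^d$ onto $\bigoplus_jF_j(x)$; put $\Phi_x:=\exp_x\circ\iota_x$. Then $\Phi_x(0)=x$ and $R_j(x)=D\Phi_x(0)^{-1}F_j(x)$ are the coordinate summands, hence Euclidean-orthogonal, proving (1). For (2), write $D\tilde f_{i,x}(0)=\iota_{f_ix}^{-1}\circ Df_i(x)\circ\iota_x$ and establish the key inequality
$$
e^{\lambda_{i,j}(x)-\gamma(x)}\|v\|'_x\le\|Df_i(x)v\|'_{f_ix}\le e^{\lambda_{i,j}(x)+\gamma(x)}\|v\|'_x,\qquad v\in F_j(x),
$$
by telescoping the $i$-th inner series in the definition of $\langle\cdot,\cdot\rangle'_{f_ix,j}$ and controlling the cross-terms $i'\neq i$ through the compactness-driven bounds on $\|Df_{i'}^{\pm 1}\|$ together with the invariance $\lambda_{i',j}(f_ix)=\lambda_{i',j}(x)$; the smallness of $\gamma(x)$ prescribed by (\ref{gamma}) absorbs every cross-term constant. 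Define $\ell(x)$ as a measurable majorant of $\max(\|\iota_x\|,\|\iota_x^{-1}\|)$, multiplied by a large constant so that $\Phi_x$ maps $\tilde B(\ell(x)^{-1})$ into a normal neighborhood of $x$ on which $\exp_x$ is a bi-Lipschitz diffeomorphism. Property (5) then follows from the mean value theorem applied to $\exp_x$ and $\iota_x^{\pm 1}$, with $K\ge 2d$ absorbing the Euclidean-to-Lyapunov distortion on each $F_j$. The tempering $\ell(f_i^{\pm 1}x)\le\ell(x)e^{\gamma(x)}$ follows from the same telescoping applied to the ratio $\|\iota_{f_ix}^{\pm 1}\|/\|\iota_x^{\pm 1}\|$.

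\emph{Properties (3), (4) and the main obstacle.} Writing $\tilde f_{i,x}=D\tilde f_{i,x}(0)+r_{i,x}$, property (3) reduces to bounding $D^{2}\tilde f_{i,x}$ on $\tilde B(\ell(x)^{-1})$; this second derivative is controlled by the global $C^{2}$-norm of $f_i$ on the compact manifold $M$, pulled back through $\Phi_x$ and $\Phi_{f_ix}$ and so multiplied by powers of $\|\iota_x^{\pm 1}\|$ and $\|\iota_{f_ix}^{\pm 1}\|$; one enlarges $\ell(x)$ measurably to dominate those distortion factors, preserving the slow-variation by the earlier argument. Property (4) combines (2), (3), and (\ref{gamma}): $\tilde f_{i,x}^{\pm 1}$ equals its linear part $D\tilde f_{i,x}(0)^{\pm 1}$ (which grows at most like $e^{\max_j|\lambda_{i,j}(x)|+\gamma(x)}$) plus a small Lipschitz remainder, yielding the global growth bound $e^{\lambda(x)}$ and the invariance (\ref{maptochart}) on a ball of radius $e^{-\lambda(x)-\gamma(x)}\ell(x)^{-1}$. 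The principal difficulty is the \emph{simultaneous} tempering of $\ell$: for a single diffeomorphism the Lyapunov-inner-product construction is classical, but here the slow-variation $\|\iota_{f_ix}^{\pm 1}\|\le\|\iota_x^{\pm 1}\|e^{\gamma(x)}$ must hold for \emph{every} generator $f_i$ at once, which would fail in general without commutativity; it is precisely the identity $f_i\circ f_{i'}^{\,n}=f_{i'}^{\,n}\circ f_i$ that permits rewriting $Df_{i'}^{n}(f_ix)\circ Df_i(x)=Df_i(f_{i'}^{n}x)\circ Df_{i'}^{n}(x)$ and thereby estimating the cross-terms in $\langle\cdot,\cdot\rangle'_{f_ix,j}$ uniformly in $i$ and $i'$.
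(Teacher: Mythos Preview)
The paper does not prove this proposition; it simply states that ``the following properties about Lyapunov charts derive from \cite{Hu}'' and quotes Hu's construction. So there is no argument in the paper to compare against, only a citation.

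Your sketch follows the right outline, but the specific Lyapunov inner product you write down does not deliver property~(2). You set
\[
\langle u,v\rangle'_{x,j}=\sum_{i=1}^{k}\sum_{n\in\mathbb{Z}} e^{-2n\lambda_{i,j}(x)-2\gamma(x)|n|}\,\langle\!\langle Df_i^{n}(x)u,\,Df_i^{n}(x)v\rangle\!\rangle,
\]
a sum of $k$ separate one-parameter series. For the $i$-th summand the usual index shift gives the factor $e^{2\lambda_{i,j}\pm 2\gamma}$ under $Df_i$. But for a cross-term $i'\neq i$, commutativity only turns $Df_{i'}^{n}(f_ix)\,Df_i(x)$ into $Df_i(f_{i'}^{n}x)\,Df_{i'}^{n}(x)$, and the factor $\|Df_i(f_{i'}^{n}x)w\|/\|w\|$ for $w\in F_j(f_{i'}^{n}x)$ is \emph{not} close to $e^{\lambda_{i,j}(x)}$ in the Riemannian norm uniformly in $n$; compactness only gives a crude bound $C^{\pm 1}$, which cannot be absorbed by making $\gamma$ small. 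So the displayed inequality $e^{\lambda_{i,j}-\gamma}\|v\|'_x\le\|Df_i(x)v\|'_{f_ix}\le e^{\lambda_{i,j}+\gamma}\|v\|'_x$ does not follow from your norm.

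The fix, which is essentially what Hu does, is to sum over the full $\mathbb{Z}^k$-orbit rather than over the $k$ individual $\mathbb{Z}$-orbits: for $v\in F_j(x)$ set
\[
(\|v\|'_{x,j})^{2}=\sum_{\vec{n}\in\mathbb{Z}^k} e^{-2\sum_{i} n_i\lambda_{i,j}(x)-2\gamma(x)\sum_i|n_i|}\,\|D\alpha^{\vec{n}}(x)v\|^{2}.
\]
Convergence uses Proposition~\ref{Lyapunovexponents} (the simultaneous Oseledets splitting) to control $\|D\alpha^{\vec{n}}(x)v\|$ by $e^{\sum_i n_i\lambda_{i,j}+\varepsilon\sum_i|n_i|}$. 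Now applying $Df_i$ shifts $\vec{n}\mapsto\vec{n}-\vec{e}_i$ in the sum, and since $\big|\,|n_i-1|-|n_i|\,\big|\le 1$ while the other $|n_{i'}|$ are unchanged, the telescoping gives exactly $e^{2\lambda_{i,j}\pm 2\gamma}$ for \emph{every} generator $f_i$, with no cross-terms at all. The rest of your outline (definition of $\Phi_x$, $\ell$, properties (1), (3)--(5)) then goes through as you describe.
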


We shall refer to any system of local charts $\{\Phi_x, x\in \Gamma\}$ satisfying (1)-(5) as
$(\gamma,\ell)$-\emph{charts} for $\alpha$. For any constant $l>0$, denote $\Gamma_l=\{x\in M: \ell(x)\le l\}$. By the above proposition, for each $x\in \Gamma_l$ and any $0<\varepsilon<1$, we have
\begin{equation}\label{ballsrelation}
\Phi_x^{-1}B\big(x,\frac{\varepsilon}{l}\big)\subset \tilde{B}(\varepsilon),\;\;\text{and}\;\;\Phi_x\tilde{B}\big(\frac{\varepsilon}{K}\big)\subset B(x,\varepsilon).
\end{equation}

Clearly,
$$
\Gamma_l\subset \Gamma_{l+1}\;\;\text{and}\;\;\Gamma=\bigcup_{l\ge 1}\Gamma_l.
$$
 Such $\Gamma_l$'s are called \emph{Pesin sets} of $\alpha$.

\begin{proof}[Proof of Theorem 1]

Let $\Psi:\Omega\times M\longrightarrow \Omega\times M$ be the induced skew product transformation as in (\ref{Psi}). Since $\Gamma$ is $\alpha$-invariant  and with full measure, we will always concentrate on the restriction of  $\Psi$ to $\Omega\times \Gamma$.

\emph{Step 1.}  We first show the inequality
\begin{equation}\label{entropyformula1(1)}
h_{\mu}(f)\leq\int_M\max_{J\subset \{1,\cdots,s(x)\}}\sum_{j\in J}\sum_{i=1}^k \nu_i d_j(x)\lambda_{i,j}(x)d\mu(x).
\end{equation}

We introduce the notations $\tilde{f}_{\omega,x}=\Phi_{f_{\omega}x}^{-1}\circ f_{\omega}\circ\Phi_x$ and
$\tilde{f}^i_{\omega,x}=\tilde{f}_{\sigma^{i-1}\omega,x}\circ\cdots\circ\tilde{f}_{\sigma\omega,x}\circ\tilde{f}_{\omega,x}$ for $(\omega, x)\in \Omega\times \Gamma$ and $i\in \mathbb{N}$.

Fix $\varepsilon>0$ and $n\in \mathbb{N}$ arbitrarily. Let $p\in \mathbb{N}$. We write $\omega\in \Omega_p$ if for each $(\omega, x)\in \Omega\times \Gamma$ and $0\leq i\leq n$ we have
\begin{equation}\label{smallball2}
f_{\omega}^iB(y, \frac{2\varepsilon}{p})\subset B(f_{\omega}^i(x), \frac{1}{\ell(f_{\omega}^i(x))^2})
\end{equation}
and
\begin{equation}\label{smallball3}
\frac{1}{2}D\tilde{f}_{\omega,x}^i(0)\Phi_x^{-1}(B(y, \frac{\varepsilon}{p}))\subset \Phi_{f_{\omega}^i(x)}^{-1}B(f_{\omega}^i(B(y, \frac{\varepsilon}{p})))\subset 2D\tilde{f}_{\omega,x}^i(0)\Phi_x^{-1}(B(y, \frac{\varepsilon}{p}))
\end{equation}
for any $\displaystyle y\in B(x, \frac{\varepsilon}{p})$.
One can check that each $\Omega_p$ is measurable and for every $\omega$ there exists $p\in \mathbb{N}$ such that $\omega\in \Omega_p$. Consequently, $\lim_{p\rightarrow +\infty}\mathbf{P}_{\nu}(\Omega_p)=1$ since $\Omega_p\subset \Omega_{p+1}$ for all $p\in \mathbb{N}$.

For each $p\in \mathbb{N}$, take a maximal $\displaystyle\frac{\varepsilon}{p}$-seperated set $E_p$ of $M$. We then define
a measurable partition $\mathcal{Q}_p = \{\mathcal{Q}_p(x) : x \in E_p\}$ of $M$ such that $\mathcal{Q}_p(x)\subset \overline{\text{int}(\mathcal{Q}_p(x))}$ and
$$
 \text{int}(\mathcal{Q}_p(x))
= \{y\in M: d(y, x)< d(y, x_i) \text{ if } x\neq x_i\in E_p\}
$$
for every $x\in E_p$.
Clearly,
\begin{equation}\label{twoballs}
 B(x, \frac{\varepsilon}{2p})\subset\mathcal{Q}_p(x)\subset B(x, \frac{\varepsilon}{p})
\end{equation}
  for all $x\in E_p$.
Then by Lemma \ref{powerrule},
\begin{equation}\label{mixing}
nh_{\mu}(f)=h_{\mu}(f^n)=\lim_{p\rightarrow +\infty}h_{\mu}(f^n, \mathcal{Q}_p).
\end{equation}

Note that for each finite partition $\mathcal{Q}$ of $M$ one has¡¡
\begin{eqnarray*}
h_{\mu}(f, \mathcal{Q})&=&\lim_{t\longrightarrow +\infty}\frac{1}{t}\int_{\Omega}
H_{\mu}
\big(\bigvee_{i=0}^{t-1}(f_{\omega}^i)^{-1}\mathcal{\mathcal{Q}}
\big)\;d\mathbf{P}_{\nu}(\omega)\\
&\leq&\lim_{t\longrightarrow +\infty}\frac{1}{t}\int_{\Omega}
\Big[\sum_{i=1}^{t-1}H_{\mu}((f_{\omega}^i)^{-1}\mathcal{Q}|(f_{\omega}^{i-1})^{-1}\mathcal{Q})+
H_{\mu}(\mathcal{Q})\Big]\;d\mathbf{P}_{\nu}(\omega)\\
&=&\lim_{t\longrightarrow +\infty}\frac{1}{t}\Big[\int_{\Omega}
(t-1)H_{\mu}((f_{\omega})^{-1}\mathcal{Q}|\mathcal{Q})+H_{\mu}(\mathcal{Q})\;d\mathbf{P}_{\nu}(\omega)\Big]\\
&=&\int_{\Omega}H_{\mu}((f_{\omega})^{-1}\mathcal{Q}|\mathcal{Q})d\mathbf{P}_{\nu}(\omega).
\end{eqnarray*}
Thus from (\ref{mixing}), it follows that
\begin{eqnarray}\label{twoparts1}
nh_{\mu}(f)
&\leq&\limsup_{p\rightarrow +\infty}\int_{\Omega}H_{\mu}((f^n_{\omega})^{-1}\mathcal{Q}_p|\mathcal{Q}_p)d\mathbf{P}_{\nu}(\omega)\notag\\
&\leq&\limsup_{p\rightarrow +\infty}\int_{\Omega_p}H_{\mu}((f^n_{\omega})^{-1}\mathcal{Q}_p|\mathcal{Q}_p)d\mathbf{P}_{\nu}(\omega)\\
&+&\limsup_{p\rightarrow +\infty}\int_{\Omega\setminus\Omega_p}H_{\mu}((f^n_{\omega})^{-1}\mathcal{Q}_p|\mathcal{Q}_p)
d\mathbf{P}_{\nu}(\omega)\notag.
\end{eqnarray}

Note that for any $p\in \mathbb{N}$ and $\omega\in \Omega$,
\begin{eqnarray*}
H_{\mu}((f^n_{\omega})^{-1}\mathcal{Q}_p|\mathcal{Q}_p)
&=&H_{\mu}(\mathcal{Q}_p|f^n_{\omega}(\mathcal{Q}_p))\\
&=&\sum_{D\in f^n_{\omega}(\mathcal{Q}_p)}H_{\mu}(\mathcal{Q}_p| D)\mu(D)\\
&\leq& \sum_{D\in f^n_{\omega}(\mathcal{Q}_p)}\log \big(\text{card}\{C\in \mathcal{Q}_p: C\cap D\neq\emptyset\}\big).
\end{eqnarray*}
We first give a uniform exponential estimate for $\text{card}\{C\in \mathcal{Q}_p: C\cap D\neq\emptyset\}$ and then we will get a finer exponential bound for $\omega\in \Omega_p$. We will apply the approach in the proof of Theorem S.2.13 of \cite{Katok} to get these estimates.

\emph{Claim 1.} There exists a positive constant $K_1$ which is independent of $\omega$ such that for any $D\in f^n_{\omega}(\mathcal{Q}_p)$,
\begin{equation}\label{card1}
\text{card}\{C\in \mathcal{Q}_p: C\cap D\neq\emptyset\}\leq K_1\big(\max_{1\leq i\leq k}\sup_{x\in M}\|Df_i(x)\|\big)^{nd}.
\end{equation}
\emph{Proof of Claim 1}. By the Mean Value Theorem, for any $A\subset M$, we have
\begin{eqnarray*}
\text{diam}(f_{\omega}^n(A))&\leq& \text{diam }A \cdot \sup_{x\in M}\|Df_{\omega}^n(x)\|\\
&\leq& \text{diam }A\cdot \big(\max_{1\leq i\leq k}\sup_{x\in M}\|Df_i(x)\|\big)^n.
\end{eqnarray*}
Thus if $C\cap D\neq \emptyset$ then $C$ is contained in the $\displaystyle\frac{2\varepsilon}{p}$-neighborhood of $D$ whose diameter is at most $\big(\big(\displaystyle\max_{1\leq i\leq k}\sup_{x\in M}\|Df_i(x)\|\big)^n+2\big)\cdot \frac{2\varepsilon}{p}$ and whose volume is hence bounded by
$$
const.\big(\big(\big(\displaystyle\max_{1\leq i\leq k}\sup_{x\in M}\|Df_i(x)\|\big)^n+2\big)\cdot \frac{2\varepsilon}{p}\big)^d.
$$
On the other hand $C$ contains a ball of radius $\displaystyle\frac{\varepsilon}{2p}$ by (\ref{twoballs}) and hence the volume of $C$ is at least $const. \big(\displaystyle\frac{\varepsilon}{p}\big)^d$. Comparing these estimates gives (\ref{card1}) and hence claim 1 holds.

\emph{Claim 2.}  There exists a positive constant $K_2$ which is independent of $\omega\in \Omega_p$ such that for any $D\in f^n_{\omega}(\mathcal{Q}_p)$ and each $x\in \Gamma\cap\big(f_{\omega}^n\big)^{-1}D$,
\begin{equation}\label{card2}
\text{card}\{C\in \mathcal{Q}_p: C\cap D\neq\emptyset\}\leq K_2\prod_{j=1}^{s(x)}\max_{0\leq t\leq n-1}\prod_{r=0}^te^{d_j(x)(\lambda_{\sigma^r\omega ,j}(x)+\gamma(x))},
\end{equation}
in which $\lambda_{\sigma^r\omega, j}(x)=\lambda_{i,j}(x)$ if $f_{\sigma^r\omega}=f_i$.

\emph{Proof of Claim 2}.
Let $C'=\big(f_{\omega}^n\big)^{-1}D$. Pick some point $x\in \Gamma\cap C'$ and let $B=B(x, 2\text{diam }C')$. Then, by (\ref{smallball2}), $D\subset B_0:=\exp_{f_{\omega}^n(x)}\big(Df_{\omega}^n(x)(\exp_x^{-1}(B))\big).$
If $C\in \mathcal{Q}_p, B_0\cap C\neq\emptyset$, then $C\subset B_1:=\{y:d(y,B_0)<\text{diam }\mathcal{Q}_p\}.$
So by (\ref{hyperbolicity}), (\ref{metricsrelation}), (\ref{smallball2}) and (\ref{smallball3}),
\begin{eqnarray*}
&&\text{card }\{C\in \mathcal{Q}_p:D\cap C\neq\emptyset\}\leq \frac{\text{vol}(B_1)}{(\text{diam }\mathcal{Q}_p)^d}\\
&\leq&\frac{K^d\cdot\text{vol}\big(\Phi_{f_{\omega}^n(x)}^{-1} B_1\big)}{(\frac{\varepsilon}{p})^d}
\leq\frac{K^d\cdot\text{vol}\big(\tilde{f}_{\omega,x}^n(\Phi_x^{-1} B)\big)}{(\frac{\varepsilon}{p})^d}\\
&\leq&\frac{const.\cdot K^d\cdot  (\frac{8\varepsilon}{p})^d \displaystyle\prod_{j=1}^{s(x)}\max_{0\leq t\leq n-1}\prod_{r=0}^te^{d_j(x)(\lambda_{\sigma^r\omega ,j}(x)+\gamma(x))}}{(\frac{\varepsilon}{p})^d}.
\end{eqnarray*}
Hence (\ref{card2}) and then Claim 2 hold.

Since $\lim_{p\rightarrow +\infty}\mathbf{P}_{\nu}(\Omega_p)=1$, applying (\ref{card1}) that in Claim 1 and (\ref{card2}) that in Claim 2 to (\ref{twoparts1}), we have
\begin{eqnarray*}
h_{\mu}(f)&\leq&\limsup_{n\rightarrow\infty}\frac{1}{n}\int_{\Omega}\int_M\log\Big(\prod_{j=1}^{s(x)}\max_{0\leq t\leq n-1}\prod_{r=0}^te^{d_j(x)(\lambda_{\sigma^r\omega ,j}(x)+\gamma(x))}\Big)d\mu(x) d\mathbf{P}_{\nu}(\omega)\notag\\
&\leq&\int_M\int_{\Omega}\sum_{j=1}^{s(x)}\limsup_{n\rightarrow\infty}\frac{1}{n}\max_{0\leq t\leq n-1}\sum_{r=0}^td_j(x)(\lambda_{\sigma^r\omega, j}(x)+\gamma(x))d\mathbf{P}_{\nu}(\omega)d\mu(x).
\end{eqnarray*}
By (\ref{gamma}), $\gamma(x)$ can be taken arbitrarily small, so we have
 \begin{eqnarray}\label{step1}
h_{\mu}(f)\leq\int_M\int_{\Omega}\sum_{j=1}^{s(x)}\limsup_{n\rightarrow\infty}\frac{1}{n}\max_{0\leq t\leq n-1}\sum_{r=0}^td_j(x)\lambda_{\sigma^r\omega, j}(x)d\mathbf{P}_{\nu}(\omega)d\mu(x).
\end{eqnarray}
 Now for any $x\in \Gamma$ and each $1\leq j\leq s(x)$, define a function
\begin{equation}\label{eta_j1}
\eta^{(j)}_x:\Omega\longrightarrow \mathbb{R}^+,\;
\eta^{(j)}_x(\omega)=d_j(x)\lambda_{\omega, j}(x).
\end{equation}
 Clearly, it is $\mathbf{P}_{\nu}$-integrable.
Since $\mathbf{P}_{\nu}$ is $\sigma$-ergodic, by Birkhoff's Ergodic Theorem, we have that for $\mathbf{P}_{\nu}$-almost all $\omega\in \Omega$,
$$
\lim_{n\rightarrow\infty}\frac{1}{n}\sum_{r=0}^{n-1}\eta^{(j)}_x(\sigma^r\omega)
=\int_{\Omega}\eta^{(j)}_x(\omega)d\mathbf{P}_{\nu}(\omega),
$$
and hence,
$$
\lim_{n\rightarrow\infty}\frac{1}{n}\sum_{r=0}^{n-1}d_j(x)\lambda_{\sigma^r\omega, j}(x)
=\int_{\Omega}d_j(x)\lambda_{\omega ,j}(x)d\mathbf{P}_{\nu}(\omega)=\sum_{i=1}^k\nu_id_j(x)\lambda_{i, j}(x).
$$
By Lemma 9.2 of \cite{Einsiedler}, for any sequence of real numbers $\{a_n\}$ such that $\displaystyle\frac{a_n}{n}\rightarrow a$ as $n\rightarrow\infty$, we have $\displaystyle\max\limits_{1\leq t\leq n}\frac{a_t}{n}\rightarrow \max\{a,0\}$ as $n\rightarrow\infty$. Then let $a_n=\sum_{r=0}^{n-1}\eta^{(j)}_x(\sigma^r\omega)$, we have
\begin{eqnarray}\label{lim1}
\lim_{n\rightarrow\infty}\frac{1}{n}\max_{0\leq t\leq n-1}\sum_{r=0}^td_j(x)\lambda_{\sigma^r\omega, j}(x)
=\max\bigl\{\sum_{i=1}^k\nu_id_j(x)\lambda_{i, j}(x), \;0\bigr\}.
\end{eqnarray}
Therefore,  the inequality in (\ref{step1}) becomes
$$
h_{\mu}(f)\leq\int_M\max_{J\subset \{1,\cdots,s(x)\}}\sum_{j\in J}\sum_{i=1}^k \nu_i d_j(x)\lambda_{i,j}(x)d\mu(x),
$$
i.e., the inequality (\ref{entropyformula1(1)}) holds.

\emph{Step 2.} We show the reverse inequality
\begin{equation}\label{entropyformula1(2)}
h_{\mu}(f)\geq\int_M\max_{J\subset \{1,\cdots,s(x)\}}\sum_{j\in J}\sum_{i=1}^k \nu_i d_j(x)\lambda_{i,j}(x)d\mu(x).
\end{equation}

From Lemma \ref{localentropy}, the main work is to calculate the reduction rate of the volume of $B_n(\omega, \rho_{\varepsilon}, x)$ for some suitable $\rho_{\varepsilon}$ via the Lyapunov exponents of the generators of the $\mathbb{Z}^k$-action $\alpha$.

Given $\varepsilon>0$, take $l=l(\varepsilon)>1$ such that $\displaystyle \mu(\Gamma_l)>1-\frac{\varepsilon}{2}$. By Poincar$\acute{e}$'s Recurrence Theorem, almost all points in $\Omega\times \Gamma_l$ return infinitely often to  $\Omega\times \Gamma_l$ under positive iteration by $\Psi$.
Let $\tau$ be the return function of
$\Omega\times \Gamma_l$, that is, let $\tau(\omega,x)$ be the least positive integer such that $\Psi^{\tau(\omega,x)}(\omega,x)\in \Omega\times \Gamma_l$. In another word, for any $\omega\in \Omega$,  $\tau(\omega,x)$ is the least positive integer such that $f_{\omega}^{\tau(\omega,x)}(x)\in \Gamma_l$. Clearly, $\tau$ is integrable.  Extend $\tau$ to $\Omega\times M$ by putting $\tau(\omega,x)=0$ for $(\omega,x)\notin \Omega\times \Gamma_l$. For the above $\varepsilon$, define a function $\rho_{\varepsilon}:\Omega\times M\longrightarrow (0,\varepsilon)$ by
\begin{equation}\label{radiusfunction}
\rho_{\varepsilon}(\omega,x)=
\frac{\varepsilon}{l^2}e^{-(\lambda(\omega,x)+\gamma(\omega,x))\tau(\omega,x)},
\end{equation}
where
$$
\left\{\begin{array}{ll}
\gamma(\omega,x)=\gamma(x)\;\;\text{and}\;\;\lambda(\omega,x)=\lambda(x) \quad &\;\;\text{if }x\in \Gamma\\
\gamma(\omega,x)=\lambda(\omega,x)=1 \quad &\;\;\text{if }x\notin \Gamma.
\end{array}\right.
$$
Recall that $\gamma(x)$ and $\lambda(x)$ are all integrable and $\alpha$-invariant on $\Gamma$, so the function (\ref{radiusfunction}) is reasonable and integrable.

 Let
$\chi_{\Omega\times (\Gamma-\Gamma_l)}$ be the characteristic function of $\Omega\times (\Gamma-\Gamma_l)$.
By Birkhoff's Ergodic Theorem, the function
\begin{eqnarray*}
\chi^*(\omega,x):&=&\lim_{n\longrightarrow +\infty}\frac{1}{n}\sum_{i=0}^{n-1}\chi_{\Omega\times (\Gamma-\Gamma_l)}\Psi^i(\omega,x)\\
&=&\lim_{n\longrightarrow +\infty}\frac{1}{n}\text{card}\{0\le i\le n-1: \Psi^i(\omega,x)\in \Omega\times (\Gamma-\Gamma_l)\}
\end{eqnarray*}
is defined for $\mathbf{P}_{\nu}\times\mu$-almost all $(\omega,x)\in \Omega\times \Gamma$. Moreover,
\begin{eqnarray*}
\varepsilon &\geq&\mathbf{P}_{\nu}\times\mu(\Omega\times (\Gamma-\Gamma_l))
=\int_{\Omega\times \Gamma} \chi_{\Omega\times (\Gamma-\Gamma_l)}d\mathbf{P}_{\nu}\times\mu=\int_{\Omega\times \Gamma} \chi^*d\mathbf{P}_{\nu}\times\mu\\
&\geq& \int_{\{(\omega,x)\in \Omega\times \Gamma:\chi^*(\omega,x)>\sqrt{\varepsilon}\}} \chi^*d\mathbf{P}_{\nu}\times\mu\\
&>& \sqrt{\varepsilon} \cdot \mathbf{P}_{\nu}\times\mu\big(\{(\omega,x)\in \Omega\times \Gamma:\chi^*(\omega,x)>\sqrt{\varepsilon}\}\big),
\end{eqnarray*}
and hence
$$
\mathbf{P}_{\nu}\times\mu\big(\{(\omega,x)\in \Omega\times \Gamma:\chi^*(\omega,x)\le\sqrt{\varepsilon}\}\big)\geq 1-\sqrt{\varepsilon}.
$$
By Egorov's Theorem, there exist $\Delta\subset\Omega\times \Gamma$ with $\displaystyle\mathbf{P}_{\nu}\times\mu(\Delta)>1-\frac{\varepsilon}{2}$ and $N_0>0$ such that  for each $(\omega,x)\in \Delta$ and any $n>N_0$,
\begin{equation}\label{frequency}
\text{card}\{0\le i\le n-1: \Psi^i(\omega,x)\in \Omega\times (\Gamma-\Gamma_l)\le 2n\sqrt{\varepsilon}.
\end{equation}
Clearly,
\begin{equation}\label{measure}
\mathbf{P}_{\nu}\times\mu\big(\Delta\cap(\Omega\times \Gamma_l)\big)>1-\varepsilon.
\end{equation}

Given $(\omega,x)\in \Delta\cap(\Omega\times \Gamma_l)$ and $n>N_0$.
By (\ref{maptochart}) and the definition of $\rho_{\varepsilon}$ in (\ref{radiusfunction}),
\begin{equation}\label{local1}
\Phi_{f_{\omega}^ix}^{-1}B\big(f_{\omega}^ix,\rho_{\varepsilon}(\sigma^i\omega,f_{\omega}^ix)\big)
\subset\tilde{B}\big(\ell(f_{\omega}^ix)^{-1}\varepsilon\big)
\end{equation}
for any $0\le i\le n-1$,
and
\begin{equation}\label{local2}
\Phi_{f_{\omega}^{i+1}x}^{-1}\Big(f_{\sigma^{i+1}\omega}B\big(f_{\omega}^ix,
\rho_{\varepsilon}(\sigma^i\omega,f_{\omega}^ix)\big)\Big)
\subset\tilde{B}\big(\ell(f_{\omega}^{i+1}x)^{-1}\varepsilon\big)
\end{equation}
for any $0\le i\le n-2$.
Let
$$
\tilde{B}_n^{(1)}(\omega, \rho_{\varepsilon}, x)=\big\{u\in \mathbb{R}^d:|\tilde{f}^i_{\omega,x}(u)|\leq \frac{1}{K}\rho_{\varepsilon}(\sigma^i\omega, f_{\omega}^i(x)), 0\le i\le n-1\big\}
$$
and
$$
\tilde{B}_n^{(2)}(\omega, \rho_{\varepsilon}, x)=\big\{u\in \mathbb{R}^d:|\tilde{f}^i_{\omega,x}(u)|\leq \ell(f_{\omega}^ix)\cdot\rho_{\varepsilon}(\sigma^i\omega, f_{\omega}^i(x)), 0\le i\le n-1\big\}.
$$
Hence by (\ref{local1}) and (\ref{local2}), (\ref{metricsrelation}) ensures that
\begin{equation}\label{ballsrelation1}
\Phi_{x}\tilde{B}_n^{(1)}(\omega, \rho_{\varepsilon}, x)\subset B_n(\omega, \rho_{\varepsilon}, x)\subset \Phi_{x}\tilde{B}_n^{(2)}(\omega, \rho_{\varepsilon}, x).
\end{equation}

In the following, we will estimate the reduction rate of the volumes of $\tilde{B}_n^{(1)}(\omega, \rho_{\varepsilon}, x)$ and $\tilde{B}_n^{(2)}(\omega, \rho_{\varepsilon}, x)$ for $(\omega,x)\in \Delta\cap(\Omega\times \Gamma_l)$ and $n>N_0$.
By (\ref{radiusfunction}), (\ref{frequency}) and (\ref{metricsrelation}),
\begin{equation}\label{smallball}
\tilde{B}\big(\frac{\varepsilon}{Kl^2}e^{-(\lambda(\omega,x)+\gamma(\omega,x))2n\sqrt{\varepsilon}}\big)\subset
\Phi_{f_{\omega}^ix}^{-1}B\big(f_{\omega}^ix,\rho_{\varepsilon}(\sigma^i\omega,f_{\omega}^ix)\big),\;0\le i\le n-1,
\end{equation}
and by (\ref{local1}),
\begin{equation}\label{bigball}
\Phi_{f_{\omega}^ix}^{-1}B\big(f_{\omega}^ix,\rho_{\varepsilon}(\sigma^i\omega,f_{\omega}^ix)\big)
\subset\tilde{B}\big(\ell(f_{\omega}^ix)^{-1}\varepsilon\big) \subset\tilde{B}\big(\varepsilon\big)\;0\le i\le n-1.
\end{equation}
Reducing $\varepsilon$ if necessary such that for any $0<\eta\le\varepsilon$, we have that
\begin{equation}\label{linearappro}
D\tilde{f}^{\pm1}_{i,x}(0)\tilde{B}(a\eta)\subset \tilde{f}^{\pm1}_{i,x}\tilde{B}(\eta)
\subset D\tilde{f}^{\pm1}_{i,x}(0)\tilde{B}(A\eta),\;\;1\le i\le k, x\in \Gamma_l,
\end{equation}
hold for two positive constants $a$ and $A$ whenever $\tilde{f}_{i,x}\tilde{B}(\eta)$ makes sense (recall that $\tilde{f}_{i,x}$ is only well defined on $\tilde{B}(\ell(x)^{-1})$).
For any $1\le j\le s(x)$, denote
$$
\tilde{B}_{n,j}^{(1)}(\omega, \rho_{\varepsilon}, x)=\big\{u\in R_j:|D\tilde{f}^i_{\omega,x}(0)u|\leq \frac{a\varepsilon}{Kl^2}e^{-(\lambda(\omega,x)+\gamma(\omega,x))2n\sqrt{\varepsilon}}, 0\le i\le n-1\big\}
$$
and
$$
\tilde{B}_{n,j}^{(2)}(\omega, \rho_{\varepsilon}, x)=\big\{u\in R_j:|D\tilde{f}^i_{\omega,x}(0)u|\leq A\varepsilon, 0\le i\le n-1\big\}.
$$
By Proposition \ref{Lyapunovcharts}, $R_j(x), 1\le j\le s(x)$, are mutually
orthogonal in $\mathbb{R}^d$. Therefore there exist two positive numbers $b$ and $B$ only depending on the dimension $d$ such that
\begin{equation}\label{productvolume}
b\prod_{j=1}^{s(x)}\text{vol}\tilde{B}_{n,j}^{(i)}(\omega, \rho_{\varepsilon}, x)\le\text{vol}\tilde{B}_n^{(i)}(\omega, \rho_{\varepsilon}, x)\le B\prod_{j=1}^{s(x)}\text{vol}\tilde{B}_{n,j}^{(i)}(\omega, \rho_{\varepsilon}, x),\;i=1,2.
\end{equation}
Now we estimate the  volumes of $\tilde{B}_{n,j}^{(1)}(\omega, \rho_{\varepsilon}, x)$ and $\tilde{B}_{n,j}^{(2)}(\omega, \rho_{\varepsilon}, x)$ for $1\le j\le s(x)$.
By (\ref{hyperbolicity}),
\begin{equation}\label{volumej}
\text{vol}\tilde{B}_{n,j}^{(1)}(\omega, \rho_{\varepsilon}, x)\ge b \Big(\frac{a\varepsilon}{Kl^2}e^{-(\lambda(\omega,x)+\gamma(\omega,x))2n\sqrt{\varepsilon}}\Big)^{d_j(x)}\min_{0\leq t\leq n-1}\prod_{r=0}^t\bigl(e^{\lambda_{\sigma^r\omega, j}(x)+\gamma(x)}\bigr)^{-d_j(x)},
\end{equation}
in which $\lambda_{\sigma^r\omega, j}(x)=\lambda_{p,j}(x)$ if $f_{\sigma^r\omega}=f_p$.
Hence
\begin{eqnarray}\label{Dlj}
&&\limsup_{n\rightarrow\infty}\bigl[
-\frac{1}{n}\log \text{vol}\tilde{B}_{n,j}^{(1)}(\omega, \rho_{\varepsilon}, x)\bigr]\notag\\
&\le& (\lambda(\omega,x)+\gamma(\omega,x))2 d_j(x) \sqrt{\varepsilon}+
\limsup_{n\rightarrow\infty}\frac{1}{n}\max_{0\leq t\leq n-1}\log\prod_{r=0}^te^{d_j(x)(\lambda_{\sigma^r\omega ,j}(x)+\gamma(x))}\\
&=&(\lambda(x)+\gamma(x))2 d_j(x) \sqrt{\varepsilon} +\limsup_{n\rightarrow\infty}\frac{1}{n}\max_{0\leq t\leq n-1}\sum_{r=0}^td_j(x)(\lambda_{\sigma^r\omega, j}(x)+\gamma(x)).\notag
\end{eqnarray}
As we have done for
$$
\limsup_{n\rightarrow\infty}\frac{1}{n}\max_{0\leq t\leq n-1}\sum_{r=0}^td_j(x)\lambda_{\sigma^r\omega, j}(x)
$$
in (\ref{step1}), we get an equality
\begin{eqnarray}\label{lim}
\lim_{n\rightarrow\infty}\frac{1}{n}\Bigl(\max_{0\leq t\leq n-1}\sum_{r=0}^td_j(x)(\lambda_{\sigma^r\omega, j}(x)+\gamma(x))\Bigr)
=\max\bigl\{\sum_{i=1}^k\nu_id_j(x)(\lambda_{i, j}(x)+\gamma(x)), \;0\bigr\}
\end{eqnarray}
for $\mathbf{P}_{\nu}$-almost all $\omega\in \Omega$, which is similar to that in (\ref{lim1}).
Therefore for $\mathbf{P}_{\nu}$-almost all $\omega\in \Omega$ and any $x\in \Gamma_l$ with $(\omega,x)\in \Delta\cap(\Omega\times \Gamma_l)$,
\begin{eqnarray}\label{B1}
&&\limsup_{n\rightarrow\infty}\bigl[
-\frac{1}{n}\log \text{vol}\tilde{B}_n^{(1)}(\omega, \rho_{\varepsilon}, x)\bigr]\notag\\
&\le&\limsup_{n\rightarrow\infty}\Bigl[-
\frac{1}{n}\log b\prod_{j=1}^{s(x)}\text{vol}\tilde{B}_{n,j}^{(1)}(\omega, \rho_{\varepsilon}, x)\Bigr](\text{by}\;(\ref{productvolume}))\notag\\
&\le&\sum_{j=1}^{s(x)}\Big((\lambda(x)+\gamma(x))2 d_j(x) \sqrt{\varepsilon} +\limsup_{n\rightarrow\infty}\frac{1}{n}\Bigl[\max_{0\leq t\leq n-1}\sum_{r=0}^td_j(x)(\lambda_{\sigma^r\omega, j}(x)+\gamma(x))\Bigr]\Big)\;\;(\text{by}\;(\ref{Dlj}))\notag\\
&=&\sum_{j=1}^{s(x)}\Big((\lambda(x)+\gamma(x))2 d_j(x) \sqrt{\varepsilon} +\max\bigl\{\sum_{i=1}^k\nu_id_j(x)(\lambda_{i, j}(x)+\gamma(x)), \;0\bigr\}\Big)\;\;(\mbox{by}\;(\ref{lim}))\notag\\
&=&\sum_{j=1}^{s(x)}(\lambda(x)+\gamma(x))2 d_j(x) \sqrt{\varepsilon} +\max_{J\subset \{1,\cdots,s(x)\}}\sum_{j\in J}\sum_{i=1}^k \nu_i d_j(x)(\lambda_{i, j}(x)+\gamma(x)).
\end{eqnarray}
In a similar manner, we can show that
\begin{eqnarray}\label{B2}
&&\limsup_{n\rightarrow\infty}\bigl[
-\frac{1}{n}\log \text{vol}\tilde{B}_n^{(2)}(\omega, \rho_{\varepsilon}, x)\bigr]\notag\\
&\ge&\limsup_{n\rightarrow\infty}\Bigl[-
\frac{1}{n}\log B\prod_{j=1}^{s(x)}\text{vol}\tilde{B}_{n,j}^{(2)}(\omega, \rho_{\varepsilon}, x)\Bigr](\text{by}\;(\ref{productvolume}))\notag\\
&\ge&\limsup_{n\rightarrow\infty}\frac{1}{n}\sum_{j=1}^{s(x)}\max_{0\leq t\leq n-1}\log\prod_{r=0}^te^{d_j(x)(\lambda_{\sigma^r\omega ,j}(x)-\gamma(x))}\;\;(\text{by}\;(\ref{hyperbolicity}))\notag\\
&=&\max_{J\subset \{1,\cdots,s(x)\}}\sum_{j\in J}\sum_{i=1}^k \nu_i d_j(x)(\lambda_{i, j}(x)-\gamma(x)).
\end{eqnarray}

By (\ref{B1}), (\ref{B2}), (\ref{ballsrelation1}) and (\ref{metricsrelation}), for $\mathbf{P}_{\nu}$-almost all $\omega\in \Omega$ and any $x\in \Gamma_l$ with $(\omega,x)\in \Delta\cap(\Omega\times \Gamma_l)$,
\begin{eqnarray}\label{maininequality}
&&\max_{J\subset \{1,\cdots,s(x)\}}\sum_{j\in J}\sum_{i=1}^k \nu_i d_j(x)(\lambda_{i, j}(x)-\gamma(x))\notag\\
&\le&\limsup_{n\rightarrow\infty}-\frac{1}{n}\log m(B_n(\omega, \rho_{\varepsilon}, x))\\
&\le&\sum_{j=1}^{s(x)}(\lambda(x)+\gamma(x))2 d_j(x) \sqrt{\varepsilon} +\max_{J\subset \{1,\cdots,s(x)\}}\sum_{j\in J}\sum_{i=1}^k \nu_i d_j(x)(\lambda_{i, j}(x)+\gamma(x)).\notag
\end{eqnarray}
Notice that $\mu(\Gamma_l)\longrightarrow 1$ and hence $\mathbf{P}_{\nu}\times\mu\big(\Delta\cap(\Omega\times \Gamma_l)\big)\longrightarrow 1$ as $\varepsilon\longrightarrow 0$. So for $\mathbf{P}_{\nu}$-a.e. $\omega\in \Omega$ and
$\mu$-a.e. $x\in\Gamma$,
 \begin{eqnarray}\label{maininequality1}
&&\max_{J\subset \{1,\cdots,s(x)\}}\sum_{j\in J}\sum_{i=1}^k \nu_i d_j(x)(\lambda_{i, j}(x)-\gamma(x))\notag\\
&\le&\lim_{\varepsilon\longrightarrow 0}\limsup_{n\rightarrow\infty}-\frac{1}{n}\log m(B_n(\omega, \rho_{\varepsilon}, x))\notag\\
&\le&\max_{J\subset \{1,\cdots,s(x)\}}\sum_{j\in J}\sum_{i=1}^k \nu_i d_j(x)(\lambda_{i, j}(x)+\gamma(x)).\notag
\end{eqnarray}
By (\ref{gamma}), $\gamma(x)$ can be taken arbitrarily small, so we have
 \begin{eqnarray}\label{maininequality2}
\lim_{\varepsilon\longrightarrow 0}\limsup_{n\rightarrow\infty}-\frac{1}{n}\log m(B_n(\omega, \rho_{\varepsilon}, x))=\max_{J\subset \{1,\cdots,s(x)\}}\sum_{j\in J}\sum_{i=1}^k \nu_i d_j(x)\lambda_{i, j}(x).
\end{eqnarray}
Note $\mu(\Gamma)=1$. By Lemma \ref{localentropy}, integrating both sides of the above equation we get the inequality (\ref{entropyformula1(2)}).

By the inequalities (\ref{entropyformula1(1)}) in Step 1 and (\ref{entropyformula1(2)}) in Step 2, we obtain the desired entropy formula (\ref{entropyformula1}).
\end{proof}

\section{Topological entropy for random $\mathbb{Z}^k$ and $\mathbb{Z}_+^k$-actions}

In the above section, we obtain an entropy formula for $C^2$ random $\mathbb{Z}^k$-actions on closed $C^{\infty}$ Riemannian manifolds via the Lyapunov exponents of the generators. By the variational principle (\ref{VP}), we obtain the lower bounds of topological entropy  immediately.

\begin{proposition}
For any  random ${\mathbb{Z}}^k$-action $f$ over $(\Omega, \mathcal{A}, \mathbf{P}_{\nu}, \sigma)$ as in Theorem 1 we have
\begin{equation}\label{entropyformula3}
h(f)\ge\int_M\max_{J\subset \{1,\cdots,s(x)\}}\sum_{j\in J}\sum_{i=1}^k \nu_i d_j(x)\lambda_{i,j}(x)d\mu(x).
\end{equation}
\end{proposition}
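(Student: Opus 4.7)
The proof will be very short since both ingredients are already in hand. The plan is to combine the variational principle (\ref{VP}) with the measure-theoretic entropy formula from Theorem 1.

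First, I would fix an arbitrary $\alpha$-invariant Borel probability measure $\mu$ on $M$ which is absolutely continuous with respect to the Lebesgue measure $m$; the hypotheses of Theorem 1 require us to have such a measure in mind (and in concrete settings, e.g.\ the torus examples, $m$ itself furnishes one). Since an $\alpha$-invariant measure is automatically $f$-invariant (as remarked just after the definition of $f$-invariance in Section 1.1), $\mu$ lies in the class of measures over which the variational principle is taken.

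Second, I would apply the variational principle (\ref{VP}) to conclude
$$
h(f) \;\ge\; h_{\mu}(f).
$$
Third, I would invoke Theorem 1, which, under precisely the hypotheses above ($C^2$ action, $\mu$ is $\alpha$-invariant and absolutely continuous), gives the explicit formula
$$
h_{\mu}(f) \;=\; \int_M \max_{J\subset \{1,\cdots,s(x)\}} \sum_{j\in J} \sum_{i=1}^{k} \nu_i\, d_j(x)\, \lambda_{i,j}(x)\, d\mu(x).
$$
Chaining these two yields (\ref{entropyformula3}).

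There is no real obstacle here, since all the heavy lifting is done by Theorem 1 and by the variational inequality (\ref{VP}); the only thing to verify is that the measure appearing in Theorem 1 is admissible as a test measure for the variational principle, and this follows at once from the fact that $\alpha$-invariance implies $f$-invariance. The statement is therefore best viewed as a direct corollary of Theorem 1 combined with (\ref{VP}), and the proof reduces to recording these two facts in sequence.
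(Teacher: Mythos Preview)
Your proposal is correct and matches the paper's own argument exactly: the paper simply states that by the variational principle (\ref{VP}) combined with Theorem 1, the lower bound follows immediately, without writing out any further details. Your additional remark that $\alpha$-invariance implies $f$-invariance (so $\mu$ is admissible in (\ref{VP})) is the only point one might want to make explicit, and the paper has already noted this in Section 1.1.
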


In the following we begin to consider the topological entropy for more general random $\mathbb{Z}_+^k$-actions generated by non-smooth or non-invertible maps.
 In the remaining of this section, we always assume that $\alpha$ is a $C^0$ ${\mathbb{Z}}_+^k$-action on a compact metric space $X$ with the generators $f_i, 1\leq i\leq k$, and $f$ is the induced random ${\mathbb{Z}}_+^k$-action over $(\Omega, \mathcal{A}, \mathbf{P}_{\nu}, \sigma)$, in which $\mathbf{P}_{\nu}=\nu^{\mathbb{Z}_+}$ for a Borel probability measure $\nu$  on $\mathcal{G}_{\alpha}$. Denote $\nu_i=\nu(f_i)$.

Let $b(\varepsilon)$ be the
minimum cardinality of covering of $X$ by $\varepsilon$-balls. Then
$$
D(X)=\limsup_{\varepsilon\rightarrow0}\frac{{\log}b(\varepsilon)}{|{\log}\varepsilon|}\in\mathbb{R}^+\cup\{\infty\}
$$
is called the \emph{ball dimension} of $X$.
It is well known that if a map $g:X\longrightarrow X$ is Lipschitzian with the Lipschitz constant $L(g)$, then
$$
h(g)\leq D(X)\log(\max\{1, L(g)\}),
$$
see Theorem 3.2.9 of \cite{Katok} for example. In the following, we give the corresponding inequalities for a  random ${\mathbb{Z}}_+^k$-action $f$.

\begin{proposition}\label{Lips}
 Let $f$ be a random ${\mathbb{Z}}_+^k$-action over $(\Omega, \mathcal{A}, \mathbf{P}_{\nu}, \sigma)$ induced by a $C^0$ ${\mathbb{Z}}_+^k$-action $\alpha$. If the generators $f_i, 1\leq i\leq k$, are all Lipschitz maps with the Lipschitz constants $L(f_i), 1\leq i\leq k$, respectively, then we have
\begin{equation}\label{Lipformula}
 h(f)\leq D(X)\sum_{i=1}^k \nu_i\log L^+(f_i),
\end{equation}
 where $L^+(f_i)=\max\{1, L(f_i)\}$.
\end{proposition}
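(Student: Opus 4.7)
The plan is to reduce to the nonautonomous case via the formula $h(f)=\int_\Omega h(f,\omega)\,d\mathbf{P}_{\nu}(\omega)$ in (\ref{topint}), estimate $h(f,\omega)$ pointwise by a Bowen-ball volume argument, and close with Birkhoff's Ergodic Theorem applied to the cocycle of log-Lipschitz constants.

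First I would establish the following deterministic estimate along each fibre. Since each generator $f_i$ is Lipschitz with constant $L(f_i)\le L^+(f_i)$, the composition $f_\omega^n=f_{\sigma^{n-1}\omega}\circ\cdots\circ f_\omega$ is Lipschitz with constant bounded by $\prod_{i=0}^{n-1}L^+(f_{\sigma^i\omega})$. Consequently, writing
$$
\delta_n(\omega,\varepsilon):=\frac{\varepsilon}{\prod_{i=0}^{n-1}L^+(f_{\sigma^i\omega})}\le\varepsilon,
$$
any cover of $X$ by balls of radius $\delta_n(\omega,\varepsilon)$ is the centre set of an $(\omega,n,\varepsilon)$-spanning set of $X$. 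Hence $r(\omega,n,\varepsilon,X)\le b(\delta_n(\omega,\varepsilon))$, where $b(\cdot)$ is the covering number used in the definition of $D(X)$.

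Next I would take logarithms, divide by $n$, and compare with the definition of the ball dimension. Provided $\delta_n(\omega,\varepsilon)\longrightarrow 0$ as $n\to\infty$, one may factor
$$
\frac{1}{n}\log r(\omega,n,\varepsilon,X)\le\frac{\log b(\delta_n(\omega,\varepsilon))}{|\log\delta_n(\omega,\varepsilon)|}\cdot\frac{|\log\delta_n(\omega,\varepsilon)|}{n}.
$$
The first factor has limsup at most $D(X)$ as $\delta_n\to 0$. For the second factor, the function $\omega\mapsto\log L^+(f_\omega)$ is bounded (hence integrable) because $\mathcal{G}_\alpha$ is finite, and Birkhoff's Ergodic Theorem applied to $(\Omega,\mathcal{A},\mathbf{P}_\nu,\sigma)$ yields, for $\mathbf{P}_\nu$-a.e. $\omega$,
$$
\frac{|\log\delta_n(\omega,\varepsilon)|}{n}=\frac{-\log\varepsilon+\sum_{i=0}^{n-1}\log L^+(f_{\sigma^i\omega})}{n}\longrightarrow\sum_{i=1}^k\nu_i\log L^+(f_i).
$$
Taking limsup in $n$ and then letting $\varepsilon\to 0$ gives $h(f,\omega)\le D(X)\sum_{i=1}^k\nu_i\log L^+(f_i)$ almost surely in the case where $\sum_i\nu_i\log L^+(f_i)>0$. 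Integrating this constant bound over $\Omega$ via (\ref{topint}) yields (\ref{Lipformula}).

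The one delicate case, and the main obstacle, is when $\sum_{i=1}^k\nu_i\log L^+(f_i)=0$; then $\delta_n(\omega,\varepsilon)$ need not tend to zero and the product $D(X)\cdot 0$ could be the indeterminate $\infty\cdot 0$ (when $D(X)=\infty$). Here I would argue separately: the vanishing of the sum, together with $L^+\ge 1$, forces $L^+(f_i)=1$ for every $i$ with $\nu_i>0$, so for $\mathbf{P}_\nu$-a.e.\ $\omega$ every map $f_{\sigma^i\omega}$ is non-expanding. Then $\delta_n(\omega,\varepsilon)\ge\varepsilon$, so $r(\omega,n,\varepsilon,X)\le b(\varepsilon)$ is bounded in $n$ and $h(f,\omega)=0$, matching the right-hand side of (\ref{Lipformula}) under the convention $D(X)\cdot 0=0$. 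Combining both cases and integrating completes the proof.
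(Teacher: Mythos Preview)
Your argument is correct and follows essentially the same route as the paper: reduce to the fibrewise entropy via $h(f)=\int_\Omega h(f,\omega)\,d\mathbf{P}_\nu$, bound $r(\omega,n,\varepsilon,X)$ using that the Lipschitz constant of $f_\omega^n$ is at most $\prod_{i=0}^{n-1}L^+(f_{\sigma^i\omega})$, and then apply Birkhoff's Ergodic Theorem to $\omega\mapsto\log L^+(f_\omega)$. Your factorisation through $b(\delta_n)/|\log\delta_n|$ makes the appearance of $D(X)$ more transparent than the paper's one-line ``standard discussion'', and your explicit treatment of the degenerate case $\sum_i\nu_i\log L^+(f_i)=0$ (hence $D(X)\cdot 0$) is a refinement the paper omits.
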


\begin{proof}
From (\ref{topint}), $h(f)=\int h(f, \omega)\;d\mathbf{P}_{\nu}(\omega)$, where
$$
h(f, \omega)=\lim_{\varepsilon \rightarrow
0}\limsup_{n\rightarrow\infty} \frac{1}{n} \log r(\omega,n,\varepsilon,X).
$$
By a standard discussion, we can see that for any $\varepsilon>0, n>0$ and $\omega\in \Omega$,
$$
r(\omega,n,\varepsilon,X)\leq C \prod_{i=0}^{n-1}L^+(f_{\sigma^i\omega})^{D(X)},
$$
where $C$ is a constant which only depends on $\varepsilon$.
Hence
$$
h(f, \omega)\leq D(X)\limsup_{n\rightarrow\infty} \frac{1}{n}\sum_{i=0}^{n-1}\log L^+(f_{\sigma^i\omega}).
$$
Now we define a function
\begin{equation}\label{beta}
\beta:\Omega\longrightarrow \mathbb{R}^+,\;
\beta(\omega)=\log L^+(f_{\omega}).
\end{equation}
 Clearly, it is $\mathbf{P}_{\nu}$-integrable.
Since $\mathbf{P}_{\nu}$ is $\sigma$-ergodic, by Birkhoff's Ergodic Theorem, we have that for $\mathbf{P}_{\nu}$-almost all $\omega\in \Omega$,
$$
\lim_{n\rightarrow\infty}\frac{1}{n}\sum_{t=0}^{n-1}\beta(\sigma^t\omega)
=\int_{\Omega}\beta(\omega)d\mathbf{P}_{\nu}(\omega).
$$
Equivalently,
$$
\lim_{n\rightarrow\infty}\frac{1}{n}\sum_{t=0}^{n-1}\log L^+(f_{\sigma^t\omega})
=\sum_{i=1}^k \nu_i\log L^+(f_i).
$$
Therefore, the inequality (\ref{Lipformula}) holds.
\end{proof}

\begin{corollary}
Let $f$ be a random ${\mathbb{Z}}_+^k$-action over $(\Omega, \mathcal{A}, \mathbf{P}_{\nu}, \sigma)$ induced by a $C^1$ ${\mathbb{Z}}^k$-action $\alpha$ on a $d$-dimensional closed Riemannian
manifold $M$. Then
\begin{equation}\label{smoothformula}
 h(f)\leq d\sum_{i=1}^{k} \nu_i\log D^+(f_i),
\end{equation}
where $ D^+(f_i)=\max\{1,\sup_{ x\in M}\|Df_i(x)\|\}$.
\end{corollary}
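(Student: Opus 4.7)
The plan is to reduce the corollary directly to Proposition \ref{Lips}. The two ingredients I need are: (i) an estimate of the Lipschitz constant of a $C^1$ map on $M$ in terms of the supremum of the norm of its derivative, and (ii) the fact that the ball dimension of a $d$-dimensional closed Riemannian manifold equals $d$.

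First I would verify that each generator $f_i$ of $\alpha$ is Lipschitz with
$$
L(f_i) \leq \sup_{x \in M}\|Df_i(x)\|.
$$
Since $M$ is closed and $f_i$ is $C^1$, the quantity $\sup_{x\in M}\|Df_i(x)\|$ is finite. The Lipschitz bound then follows by a standard arc-length argument along minimizing geodesics on $M$: for any two points $x, y \in M$, pick a minimizing geodesic $\gamma$ joining them and estimate $d(f_i(x), f_i(y))$ by the length of $f_i \circ \gamma$, which is at most $\sup_x\|Df_i(x)\| \cdot d(x,y)$. Consequently $L^+(f_i) \leq D^+(f_i)$.

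Next, since $M$ is a $d$-dimensional closed Riemannian manifold, a standard volume-comparison argument (the number of $\varepsilon$-balls needed to cover $M$ grows like $\varepsilon^{-d}$) gives $D(M) = d$.

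Finally, I would invoke Proposition \ref{Lips} with $X = M$ to obtain
$$
h(f) \leq D(M)\sum_{i=1}^k \nu_i \log L^+(f_i) \leq d\sum_{i=1}^k \nu_i \log D^+(f_i),
$$
which is the desired inequality. There is no substantive obstacle here; the corollary is essentially a packaging of Proposition \ref{Lips} in the smooth setting, and the only point requiring a moment of care is ensuring the Lipschitz/derivative comparison on a Riemannian manifold, which is handled by the geodesic argument above.
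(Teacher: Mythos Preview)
Your proposal is correct and matches the paper's intent: the corollary is stated immediately after Proposition~\ref{Lips} without a separate proof, being an immediate application of that proposition together with the standard facts $L(f_i)\le \sup_x\|Df_i(x)\|$ and $D(M)=d$. Your geodesic and volume-comparison justifications for these two facts are exactly the right (and only) missing details.
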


When we define entropy we
only look at the future behavior of the considered system. In recent years,
Hurley \cite{Hurley}, Nitecki and Przytycki \cite{Nitecki}, Cheng and Newhouse \cite{Cheng} and Zhu \cite{Zhu07} formulated and studied several
entropy-like invariants based on the preimage structure of the systems in deterministic and random settings respectively. They all measure how complex the preimages spread the points on the phase space.  Some relations between entropy and these preimage entropies were investigated and hence the calculation of entropy becomes easier in certain cases. We only use the so-called ``pointwise  preimage entropy" to estimate the topological entropy for certain random ${\mathbb{Z}}_+^k$-actions.

Let $f$ be a random ${\mathbb{Z}}_+^k$-action over $(\Omega, \mathcal{A}, \mathbf{P}_{\nu}, \sigma)$ induced by a ${\mathbb{Z}}_+^k$-action $\alpha$ on $X$.  For any $\omega\in \Omega$,
let
$$
h_{(m)}(f, \omega)=\lim_{\varepsilon\rightarrow 0}\limsup
_{n\rightarrow \infty}\frac{1}
{n}\log\sup_{x\in X}r(\omega, n, \varepsilon, (f_{\omega}^n)^{-1}(x)).
$$
By a discussion similar to that in \cite{Kifer1}, we can see that $h_{(m)}(f,\omega)$ is $\mathbf{P}_{\nu}$-integrable. We call the quantity
\begin{equation}\label{hm}
h_{(m)}(f):=\int_{\Omega}h_{(m)}(f, \omega) d\mathbf{P}_{\nu}(\omega)
\end{equation}
the \emph{pointwise  preimage entropy} of $f$.

When $X$ is a finite graph and  the generators $f_i, 1\leq i\leq k$, are all continuous (and hence equi-continuous), then from a slight adaption of  Theorem 6.4  in \cite{Nitecki}, we have for any $\omega\in \Omega$,
\begin{equation}\label{h=hm}
h(f,\omega)=h_{(m)}(f,\omega).
\end{equation}
  When $X=M$ is a closed oriented Riemannian manifold and  the generators $f_i, 1\leq i\leq k$, are all expanding maps, i.e., $\max_{x\in M}\{\|Df_i(x)\|:x\in M,1\leq i\leq k\}>1$, then adapt Proposition 6.1 in \cite{Nitecki} to our case, we also have the equality (\ref{h=hm}) for each $\omega\in \Omega$.

  \begin{proposition}\label{twohi}
Let $f$ be a random ${\mathbb{Z}}_+^k$-action over $(\Omega, \mathcal{A}, \mathbf{P}_{\nu}, \sigma)$ induced by a ${\mathbb{Z}}_+^k$-action $\alpha$ on $X$.

(1) If $X$ is a finite graph and  the generators $f_i, 1\leq i\leq k$, are all homeomorphisms, then $h(f)=0$.

(2) If $X=M$ is a closed oriented Riemannian manifold and  the generators $f_i, 1\leq i\leq k$, are all expanding maps, then we have
\begin{equation}\label{expanding}
 h(f)=\sum_{i=1}^k \nu_i\log |\deg(f_i)|,
\end{equation}
 where $\deg(f_i)$ is the degree of $f_i$.
\end{proposition}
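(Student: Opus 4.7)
The plan is to use the identity $h(f,\omega)=h_{(m)}(f,\omega)$ from (\ref{h=hm}), which holds in both settings covered by the proposition, to replace topological entropy by the pointwise preimage entropy. Then I would compute $h_{(m)}(f,\omega)$ directly from the combinatorics of the fibers $(f_\omega^n)^{-1}(x)$ and pass to the averaged statement via (\ref{topint}), (\ref{hm}) and Birkhoff's Ergodic Theorem applied to the ergodic base $(\Omega,\mathbf{P}_\nu,\sigma)$.

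Part (1) is almost immediate: since each generator is a homeomorphism, so is every composition $f_\omega^n$, and hence $(f_\omega^n)^{-1}(x)$ is a singleton for each $x\in X$. Therefore $r(\omega,n,\varepsilon,(f_\omega^n)^{-1}(x))\le 1$ and $h_{(m)}(f,\omega)=0$ identically, so by (\ref{h=hm}) and (\ref{topint}) we conclude $h(f)=0$.

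For part (2) I would argue in three steps. First, each $C^1$ expanding self-map $f_i$ of the closed oriented manifold $M$ is a covering map whose fibers have exactly $|\deg(f_i)|$ points, so $|(f_\omega^n)^{-1}(x)|=\prod_{r=0}^{n-1}|\deg(f_{\sigma^r\omega})|$ for every $x$. Second, by compactness of $M$ and finiteness of $\mathcal{G}_\alpha$, there exist uniform constants $\lambda>1$ and $\delta_0>0$ such that $d(f_i(y),f_i(z))\ge\lambda\,d(y,z)$ whenever $d(y,z)<\delta_0$, for all $1\le i\le k$. If $p\ne q$ lie in $(f_\omega^n)^{-1}(x)$ and $d(f_\omega^j(p),f_\omega^j(q))<\delta_0$ for every $0\le j\le n-1$, then iterating would force $0=d(f_\omega^n(p),f_\omega^n(q))\ge\lambda^n d(p,q)>0$, a contradiction. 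Hence some iterate separates $p$ and $q$ by at least $\delta_0$, so for any $\varepsilon<\delta_0$ the preimage fiber is $(\omega,n,\varepsilon)$-separated; combined with the trivial bound $r\le|(f_\omega^n)^{-1}(x)|$ (the fiber itself spans), this pins down $r(\omega,n,\varepsilon,(f_\omega^n)^{-1}(x))=\prod_{r=0}^{n-1}|\deg(f_{\sigma^r\omega})|$ uniformly in $x$. Third, applying Birkhoff's Ergodic Theorem to the $L^1$ function $\omega\mapsto\log|\deg(f_\omega)|$ on the ergodic system $(\Omega,\mathbf{P}_\nu,\sigma)$ yields $h_{(m)}(f,\omega)=\sum_{i=1}^k\nu_i\log|\deg(f_i)|$ for $\mathbf{P}_\nu$-a.e.\ $\omega$, and integrating via (\ref{h=hm}) and (\ref{topint}) finishes part (2).

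The main obstacle is the uniform separation in the second step of part (2): one needs a single expansion constant $\lambda>1$ and a single injectivity-style radius $\delta_0>0$ that work simultaneously for all generators, so that distinct preimage points along any random composition $f_\omega^n$ remain far apart at some earlier iterate, with thresholds independent of $\omega$. This uniformity, which rests on compactness of $M$ and finiteness of $\mathcal{G}_\alpha$, is precisely what allows the fiber count to be detected by a single fixed-scale $\varepsilon$-spanning computation and produces a clean Birkhoff average; without it, $\omega$-dependent scales could obstruct the $\varepsilon\to 0$ limit.
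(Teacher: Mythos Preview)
Your proof is correct and follows the paper's overall strategy: reduce to $h(f,\omega)=h_{(m)}(f,\omega)$ via (\ref{h=hm}), compute the pointwise preimage entropy from the cardinality of the fibers $(f_\omega^n)^{-1}(x)$, and average via Birkhoff's Ergodic Theorem on the ergodic base $(\Omega,\mathbf{P}_\nu,\sigma)$. Part~(1) is identical to the paper's argument. For part~(2), the upper bound is the same in both (the fiber spans itself, so $r\le\prod_{r=0}^{n-1}|\deg f_{\sigma^r\omega}|$), but the lower bound differs: the paper simply cites an external result (Theorem~3.1 of \cite{Zhang}) asserting $h(f,\omega)\ge\limsup_n\frac{1}{n}\sum_{i=0}^{n-1}\log|\deg f_{\sigma^i\omega}|$ for nonautonomous expanding systems, and then sandwiches via (\ref{h=hm}). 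Your uniform-expansion argument---exploiting finiteness of $\mathcal{G}_\alpha$ and compactness of $M$ to get a single $\lambda>1$ and $\delta_0>0$, so that distinct points of $(f_\omega^n)^{-1}(x)$ are $(\omega,n,\varepsilon)$-separated for any $\varepsilon<\delta_0$---is a self-contained replacement for that citation. It buys independence from \cite{Zhang} at the cost of the short uniformity argument you correctly identify as the crux; one minor refinement is that to force injectivity of $p\mapsto y_p$ in a spanning set you should take $\varepsilon<\delta_0/2$ rather than $\varepsilon<\delta_0$, since the triangle inequality gives $d_\omega^n(p,q)\le 2\varepsilon$.
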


\begin{proof}
(1) Note that for any random ${\mathbb{Z}}_+^k$-action whose generators are all homeomorphisms, we have $h_{(m)}(f)=0$.

(2) Since all of the generators are expanding, for any $\omega\in \Omega$, we have
$$
h_{(m)}(f,\omega)\leq \limsup\limits_{n\rightarrow\infty}\frac{1}{n}
\log|\deg f_w^{n-1}|=\limsup\limits_{n\rightarrow\infty}\frac{1}{n}\sum_{i=0}^{n-1}
\log|\deg f_{\sigma^iw}|.
$$
Moreover, from Theorem 3.1 of \cite{Zhang}, we have
$$
h(f,\omega)\geq \limsup\limits_{n\rightarrow\infty}\frac{1}{n}\sum_{i=0}^{n-1}
\log|\deg f_{\sigma^iw}|.
$$
Combining the above two inequalities and (\ref{h=hm}), we have
\begin{equation}\label{equal}
h(f,\omega)=\limsup\limits_{n\rightarrow\infty}\frac{1}{n}\sum_{i=0}^{n-1}
\log|\deg f_{\sigma^iw}|.
\end{equation}

Now we define a function
\begin{equation}\label{zeta}
\zeta:\Omega\longrightarrow \mathbb{R}^+,\;
\zeta(\omega)=\log |\deg f_{w}|.
\end{equation}
Since $\mathbf{P}_{\nu}$ is $\sigma$-ergodic, by Birkhoff's Ergodic Theorem, we have that for $\mathbf{P}_{\nu}$-almost all $\omega\in \Omega$,
$$
\lim_{n\rightarrow\infty}\frac{1}{n}\sum_{t=0}^{n-1}\zeta(\sigma^t\omega)
=\int_{\Omega}\zeta(\omega)d\mathbf{P}_{\nu}(\omega).
$$
Equivalently,
$$
\lim_{n\rightarrow\infty}\frac{1}{n}\sum_{t=0}^{n-1}\log |\deg f_{\sigma^iw}|
=\sum_{i=1}^k \nu_i\log |\deg(f_i)|.
$$
Therefore, the equality (\ref{expanding}) holds.
\end{proof}

In (1) of the above proposition, we require the generators $f_i, 1\leq i\leq k,$ are all homeomorphisms. When we consider  the systems on special finite graphs: circle and interval, we can give more information on topological entropy of $f$ with non-invertible generators. Let $f$ be a random ${\mathbb{Z}}_+^k$-action $f$ over $(\Omega, \mathcal{A}, \mathbf{P}_{\nu}, \sigma)$. If $X=\mathbb{S}^1$ is the unit circle and the generators $f_i, 1\leq i\leq k,$ are all monotonic. From Theorem 2.1 of \cite{Zhu06}, for any $\omega\in \Omega$, we have
$$
h(f,\omega)=\limsup_{n\rightarrow\infty}
\frac{1}{n}\sum_ {i=0}^{n-1}\log|\deg(f_{\sigma^iw})|.
$$
If $X=I$ is an interval and the generators $f_i, 1\leq i\leq k,$ are all piecewise
monotonic, then it is not difficult to see that for any $\omega\in \Omega$,
$$
h(f,\omega)\leq\limsup_{n\rightarrow\infty}
\frac{1}{n}\sum_ {i=0}^{n-1}\log\mathcal {N}(f_{\sigma^iw}),
$$
where $\mathcal {N}(f_{\sigma^iw})$
denotes the number of intervals of monotonicity for $f_{\sigma^iw}$. Therefore, by the similar method we have applied to the proof of the above propositions, we have the following results immediately.
\begin{proposition}
Let $f$ be  a random ${\mathbb{Z}}_+^k$-action  over $(\Omega, \mathcal{A}, \mathbf{P}_{\nu}, \sigma)$ induced by a $C^0$ ${\mathbb{Z}}_+^k$-action $\alpha$ on $X$.

(1) If $X=\mathbb{S}^1$ is the unit circle and the generators $f_i, 1\leq i\leq k,$ are all monotonic, then we have
$$
 h(f)=\sum_{i=1}^k \nu_i\log |\deg(f_i)|.
$$

(2) If $X=I$ is an interval and the generators $f_i, 1\leq i\leq k,$ are all piecewise
monotonic, then we have
$$
 h(f)\leq\sum_{i=1}^k \nu_i\log \mathcal {N}(f_i).
$$
\end{proposition}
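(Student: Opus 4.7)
The strategy is to apply the same recipe used earlier for Proposition \ref{twohi}: convert the problem, via formula (\ref{topint}), into a question about fibered entropies $h(f,\omega)$, then use the appropriate (known) bound on the entropy of a nonautonomous system on the circle or interval to express $h(f,\omega)$ in terms of a Birkhoff average over $\sigma$, and finally apply Birkhoff's Ergodic Theorem on $(\Omega,\mathcal{A},\mathbf{P}_{\nu},\sigma)$.

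For part (1), I would begin from the identity $h(f)=\int_{\Omega}h(f,\omega)\,d\mathbf{P}_{\nu}(\omega)$ provided by (\ref{topint}), and then invoke the formula
\[
h(f,\omega)=\limsup_{n\to\infty}\frac{1}{n}\sum_{i=0}^{n-1}\log|\deg(f_{\sigma^i\omega})|,
\]
which is exactly the one already cited in the paragraph preceding the statement (Theorem 2.1 of \cite{Zhu06}) and which holds for every $\omega\in\Omega$ since each $f_{\sigma^i\omega}$ is a monotonic self-map of $\mathbb{S}^1$. I would then define the bounded (hence integrable) function $\zeta:\Omega\to\mathbb{R}^+$ by $\zeta(\omega)=\log|\deg(f_\omega)|$; since $\mathcal{G}_\alpha$ is finite and $\mathbf{P}_\nu$ is $\sigma$-ergodic, Birkhoff's Ergodic Theorem gives, for $\mathbf{P}_\nu$-a.e.\ $\omega$,
\[
\lim_{n\to\infty}\frac{1}{n}\sum_{i=0}^{n-1}\log|\deg(f_{\sigma^i\omega})|=\int_{\Omega}\zeta\,d\mathbf{P}_\nu=\sum_{i=1}^k\nu_i\log|\deg(f_i)|.
\]
In particular the $\limsup$ coincides with the limit a.e., so $h(f,\omega)$ equals this deterministic constant for $\mathbf{P}_\nu$-a.e.\ $\omega$, and integrating (or simply reading off the constant value) yields the claimed equality.

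Part (2) is the same scheme with an inequality in place of an equality. For each $\omega\in\Omega$ the sequence $\{f_{\sigma^i\omega}\}$ consists of piecewise monotonic interval maps, and a standard computation (count preimages of a generator of intervals of monotonicity) gives
\[
h(f,\omega)\le\limsup_{n\to\infty}\frac{1}{n}\sum_{i=0}^{n-1}\log\mathcal{N}(f_{\sigma^i\omega}),
\]
as noted in the text just before the statement. Replacing $\zeta$ by $\omega\mapsto\log\mathcal{N}(f_\omega)$ (again bounded since $\mathcal{G}_\alpha$ is finite) and applying Birkhoff's Ergodic Theorem as above turns the right-hand side into $\sum_{i=1}^k\nu_i\log\mathcal{N}(f_i)$ for $\mathbf{P}_\nu$-a.e.\ $\omega$. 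Integrating the inequality with respect to $\mathbf{P}_\nu$ and using (\ref{topint}) yields the desired upper bound.

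There is no serious obstacle: both parts are direct applications of (\ref{topint}), the cited one-dimensional results for nonautonomous dynamical systems on $\mathbb{S}^1$ and $I$, and Birkhoff's theorem on $(\Omega,\sigma,\mathbf{P}_\nu)$. The only mild issue is verifying that the fibered formulas from \cite{Zhu06} and the piecewise-monotonic bound genuinely apply $\omega$-by-$\omega$ to the nonautonomous system $\{f_{\sigma^i\omega}\}_{i\ge 0}$ (not only to a single map); this is immediate because each term of the sequence is itself a monotonic (resp.\ piecewise monotonic) map on $\mathbb{S}^1$ (resp.\ $I$), which is exactly the hypothesis under which those results are proved.
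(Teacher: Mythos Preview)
Your proposal is correct and follows exactly the approach the paper intends: the paper does not write out a proof but simply says ``by the similar method we have applied to the proof of the above propositions,'' referring to the argument of Proposition~\ref{twohi}, which is precisely the scheme you describe (use (\ref{topint}), invoke the cited nonautonomous formulas for $h(f,\omega)$, define the appropriate $\zeta$, and apply Birkhoff's Ergodic Theorem on $(\Omega,\sigma,\mathbf{P}_\nu)$).
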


\section{An application: calculation of Friedland's entropy for $\mathbb{Z}^k$-actions}

It is well known that the concept of entropy for
$\mathbb{Z}^k$-actions was introduced by Ruelle \cite{Ruelle1} to investigate the lattice statistical mechanics. A necessary condition for this entropy to be
positive is that the generators should have infinite entropy as
single transformations.  In \cite{Friedland}, Friedland gave a new
type of entropy for $\mathbb{Z}^k$-actions (or, more
generally, $\mathbb{Z}_+^k$-actions) via the entropy of the natural shift map on the orbit space. A feature of Friedland's definition is that the entropy of a $\mathbb{Z}^k$-action is more than or equal to that of its generators.
For simplicity, we only consider Friedland's entropy of $\mathbb{Z}^k$-actions in this section. For more information and calculation of Friedland's entropy for $\mathbb{Z}_+^k$-actions, we refer to \cite{Friedland}, \cite{Geller} and \cite{Zhu14}.

 To estimate Friedland's entropy of a ${\mathbb{Z}}^k$-action, a natural way is to consider its extension. Let $\Omega$ be as in (\ref{Omega}), $\sigma$ be the left shift operator on $\Omega$ and $\Psi: \Omega\times X\longrightarrow \Omega\times X$ be the induced skew product transformation as in (\ref{Psi}). Obviously, it is an extension of $(\Omega, \sigma)$ since there is a projection
$$
\pi:\Omega\times X\longrightarrow \Omega,\;\pi(\omega,x)=\omega
$$
such that $\sigma\circ\pi=\pi\circ\Psi$. Hence by Bowen's entropy inequality (Theorem 17 of \cite{Bowen}), we have
\begin{equation}\label{Bowen}
h(\Psi)\leq h(\sigma)+\sup_{\omega\in\Omega}
h(\Psi,\pi^{-1}\omega).
\end{equation}
By \cite{Geller}, $\Psi$ is also an extension of $\sigma_{\alpha}$ since we can define a map
$$
\tilde{\pi}: \Omega\times X\longrightarrow X_{\alpha},\;\tilde{\pi}(\omega,x)=\{f_{\omega}^n(x)\}_{n\in{\mathbb{Z}}}
$$
such that $\tilde{\pi}\circ \Psi=\sigma_{\alpha} \circ \tilde{\pi}$.
Therefore, $h(\sigma_{\alpha})\le h(\Psi)$, and so by (\ref{Bowen}),
$$
h(\sigma_{\alpha})\le \log k +\sup_{\omega\in\Omega}
h(\Psi,\pi^{-1}\omega).
$$

When we consider $C^2 $ $\mathbb{Z}^k$-actions, we can get better estimations of Friedland's entropy.  We rewrite Theorem 2 in the following more explicit version.  To prove it we combine the techniques of variational principles for entropies and pressures and Birkhoff's Ergodic Theorem.

\begin{theorem}[Theorem 2]\label{Friedlandentropy}
Let $\alpha:\mathbb{Z}^k\longrightarrow C^2(M, M)$ be a $C^2$ $\mathbb{Z}^k$-action on a  $d$-dimensional closed Riemannian manifold $M$. Let $\mathcal{G}_{\alpha}$ and $\Omega$ be as in (\ref{generator}) and (\ref{Omega}) respectively, and $\Psi$ be the skew product transformation as in  (\ref{Psi}). If there is a measure with maximal entropy of $\Psi$ in the form of $\mathbf{P}_{\nu}\times\mu$, where $\mathbf{P}_{\nu}=\nu^{\mathbb{Z}}$ is the product measure of some Borel probability measure $\nu$  on $\mathcal{G}_{\alpha}$ with $\nu_i=\nu(f_i)$ and
$\mu$ is an $\alpha$-invariant measure on $M$ which is absolutely continuous with respect to the Lebesgue measure $m$. Then
the inequality (\ref{Fried1}) holds.

Moreover, if the above $\mu$ is also ergodic with respect to $\alpha$, then
\begin{equation}\label{Fried2}
h(\sigma_{\alpha})\le \max_{J\subset \{1,\cdots,s\}}\log\Big(\sum_{i=1}^k\exp\big(\displaystyle\sum_{j\in J}d_j\lambda_{i,j}\big)\Big),
\end{equation}
and the above $\nu$ can be determined by
\begin{equation}\label{measures}
\nu_i=\frac{\displaystyle\exp\big(\sum_{j\in J^*}d_j\lambda_{i,j}\big)}{\displaystyle\sum_{i=1}^k\exp\big(\displaystyle\sum_{j\in J^*}d_j\lambda_{i,j}\big)}, \;1\le i\le k,
 \end{equation}
for any $J^*\subset \{1,\cdots,s\}$ with
$\displaystyle\sum_{i=1}^k\exp\big(\displaystyle\sum_{j\in J^*}d_j\lambda_{i,j}\big)=\max_{J\subset \{1,\cdots,s\}}\sum_{i=1}^k\exp\big(\displaystyle\sum_{j\in J}d_j\lambda_{i,j}\big).$

Furthermore, if the above $\mu$ is ergodic and for each pair of generators $f_i$ and $f_j$, $1\le i\neq j\le k$, $\mu$(Coinc$(f_i, f_j))=0$, then the equality in (\ref{Fried2}) holds, i.e., the Friedland's entropy formula (\ref{Fried3}) holds.
\end{theorem}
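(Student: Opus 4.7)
The plan is to exploit the two factor structures of the skew product $\Psi\colon\Omega\times M\to\Omega\times M$: the base projection $\pi(\omega,x)=\omega$ onto $(\Omega,\sigma,\mathbf{P}_{\nu})$, and the orbit map $\tilde\pi(\omega,x)=\{f_{\omega}^{n}(x)\}_{n\in\mathbb{Z}}$ onto $(X_{\alpha},\sigma_{\alpha})$. Since $\tilde\pi\circ\Psi=\sigma_{\alpha}\circ\tilde\pi$, the variational principle applied to both systems gives $h(\sigma_{\alpha})\le h(\Psi)$. Because $\mathbf{P}_{\nu}\times\mu$ is assumed to be a measure of maximal entropy for $\Psi$, the Abramov--Rokhlin formula for skew products yields
\[
h(\Psi)=h_{\mathbf{P}_{\nu}\times\mu}(\Psi)=h_{\mathbf{P}_{\nu}}(\sigma)+h_{\mu}(f)=-\sum_{i=1}^{k}\nu_{i}\log\nu_{i}+h_{\mu}(f),
\]
where the fiber entropy coincides with the random entropy $h_{\mu}(f)$ of Section~1.1 and $h_{\mathbf{P}_{\nu}}(\sigma)=-\sum_{i}\nu_{i}\log\nu_{i}$ is the Bernoulli shift entropy. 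Substituting the expression for $h_{\mu}(f)$ from Theorem~1 gives (\ref{Fried1}).

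In the ergodic case the integrand is constant, so writing $c_{i}^{(J)}:=\sum_{j\in J}d_{j}\lambda_{i,j}$ the bound reads
\[
h(\sigma_{\alpha})\le -\sum_{i=1}^{k}\nu_{i}\log\nu_{i}+\max_{J\subset\{1,\ldots,s\}}\sum_{i=1}^{k}\nu_{i}c_{i}^{(J)}=:F(\nu).
\]
Since $\mu$ is $\alpha$-invariant, $\mathbf{P}_{\nu'}\times\mu$ is $\Psi$-invariant for \emph{every} probability vector $\nu'$ on $\mathcal{G}_{\alpha}$; maximality of $\mathbf{P}_{\nu}\times\mu$ therefore forces $F(\nu)=\max_{\nu'}F(\nu')$. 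Pulling the outer maximum out and applying the classical Gibbs identity
\[
\max_{\nu'}\Bigl\{-\sum_{i}\nu'_{i}\log\nu'_{i}+\sum_{i}\nu'_{i}c_{i}^{(J)}\Bigr\}=\log\sum_{i}e^{c_{i}^{(J)}},
\]
attained at $\nu'_{i}=e^{c_{i}^{(J)}}/\sum_{\ell}e^{c_{\ell}^{(J)}}$, and then optimizing over the finite family of subsets $J$, produces (\ref{Fried2}) together with the expression (\ref{measures}) for the optimal $\nu$.

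For the reverse inequality (\ref{Fried3}) the idea is to upgrade $\tilde\pi$ to a measure-theoretic isomorphism modulo null sets. Under the hypothesis $\mu(\mathrm{Coinc}(f_{i},f_{j}))=0$ for all $i\ne j$, the $\alpha$-invariance of $\mu$ together with the countability of $\mathbb{Z}$ ensures that for $\mathbf{P}_{\nu}\times\mu$-a.e.~$(\omega,x)$ the two-sided orbit $\{f_{\omega}^{n}(x)\}_{n\in\mathbb{Z}}$ avoids every $\mathrm{Coinc}(f_{i},f_{j})$ at every time. On this full-measure set the sequence $\{x_{n}\}=\tilde\pi(\omega,x)$ uniquely recovers $x=x_{0}$ and each $\omega_{n}$ via the relation $x_{n+1}=f_{\omega_{n}}(x_{n})$, so $\tilde\pi$ is injective modulo null sets. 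Invariance of measure-theoretic entropy under such isomorphisms, combined with the variational principle for $\sigma_{\alpha}$, then gives
\[
h(\sigma_{\alpha})\ge h_{\tilde\pi_{\ast}(\mathbf{P}_{\nu}\times\mu)}(\sigma_{\alpha})=h_{\mathbf{P}_{\nu}\times\mu}(\Psi)=h(\Psi),
\]
which combined with the ergodic-case upper bound yields the equality (\ref{Fried3}).

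The principal obstacle is this last step: verifying rigorously that $\mathbf{P}_{\nu}\times\mu$-a.e.~injectivity of $\tilde\pi$ transports measure-theoretic entropy exactly. The injectivity requires propagating $\mu(\mathrm{Coinc}(f_{i},f_{j}))=0$ along all two-sided orbits using $\alpha$-invariance, and the entropy transport requires a measurability check for $\tilde\pi^{-1}$ on the full-measure set. By contrast, the Gibbs maximization in the middle step is routine convex analysis, and the Abramov--Rokhlin identity matches the definition of $h_{\mu}(f)$ verbatim in this Bernoulli skew-product setting.
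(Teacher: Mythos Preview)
Your proposal is correct and follows essentially the same route as the paper: both use $h(\sigma_{\alpha})\le h(\Psi)$ via the factor map $\tilde\pi$, the Abramov--Rokhlin formula together with Theorem~1 to evaluate $h_{\mathbf{P}_{\nu}\times\mu}(\Psi)$, then optimize over $\nu'$ (you via the elementary Gibbs identity, the paper via the pressure variational principle $P(\sigma,\eta_{J})$ and Walters' equilibrium-state formula, which are the same computation), and finally use a.e.\ injectivity of $\tilde\pi$ under the coincidence hypothesis to transport entropy and close the equality. The paper is in fact briefer than you on the ``principal obstacle'' you flag---it simply asserts that $\tilde\pi$ is one-to-one on a full-measure set and that entropies agree---so your added care there is warranted rather than a deviation.
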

\begin{proof}
By Abromov-Rohklin formula \cite{Abromov}, for any invariant measure of $\Psi$ in the form of $\mathbf{P}_{\nu}\times\mu$, where $\mathbf{P}_{\nu}=\nu^{\mathbb{Z}}$, we have
$$
h_{\mathbf{P}_{\nu}\times\mu}(\Psi)=h_{\mathbf{P}_{\nu}}(\sigma) +h_{\mu}(f).
$$
Since $h_{\mathbf{P}_{\nu}}(\sigma)= -\sum_{i=1}^k\nu_i\log\nu_i$, by (\ref{entropyformula1}),
\begin{equation}\label{Abromov}
h_{\mathbf{P}_{\nu}\times\mu}(\Psi)=-\sum_{i=1}^k\nu_i\log\nu_i +\int_M\max_{J\subset \{1,\cdots,s(x)\}}\sum_{j\in J}\sum_{i=1}^k \nu_i d_j(x)\lambda_{i,j}(x)d\mu(x).
\end{equation}
By the assumption on the measure with maximal entropy of $\Psi$ and the fact $h(\sigma_{\alpha})\le h(\Psi)$,  we obtain (\ref{Fried1}).

When $\mu$ is $\alpha$-ergodic, (\ref{Abromov}) becomes
\begin{equation}\label{Fried4}
h_{\mathbf{P}_{\nu}\times\mu}(\Psi)= -\sum_{i=1}^k\nu_i\log\nu_i +\max_{J\subset \{1,\cdots,s\}}\sum_{j\in J}\sum_{i=1}^k \nu_i d_j\lambda_{i,j}.
\end{equation}
For any $J\subset \{1,\cdots,s\}$, define a function
$$
\eta_J:\Omega\longrightarrow \mathbb{R}^+,\;
\eta_J(\omega)=\sum_{j\in J}d_j\lambda_{\omega, j}.
$$
Then (\ref{Fried4}) becomes
\begin{equation}\label{Fried5}
h_{\mathbf{P}_{\nu}\times\mu}(\Psi)= \max_{J\subset \{1,\cdots,s\}}\Big\{-\sum_{i=1}^k\nu_i\log\nu_i +\int_{\Omega}\eta_J d\mathbf{P}_{\nu}(\omega)\Big\}.
\end{equation}
 Since $\mathbf{P}_{\nu}\times\mu$ is a measure with maximal entropy of $\Psi$, we can apply the variational principle of $\Psi$ as follows
\begin{eqnarray}\label{pressure}
h(\Psi)&=&\sup_{\mathbf{P}_{\nu'}\times\mu} \max_{J\subset \{1,\cdots,s\}}\Big\{ -\sum_{i=1}^k\nu'_i\log\nu'_i +\int_{\Omega}\eta_J d\mathbf{P}_{\nu'}(\omega)\Big\}\notag\\
&=&\max_{J\subset \{1,\cdots,s\}}\sup_{\mathbf{P}_{\nu'}}\Big\{h_{\mathbf{P}_{\nu'}}(\sigma)+\int_{\Omega}\eta_J d\mathbf{P}_{\nu'}(\omega)\Big\}\\
&=&\max_{J\subset \{1,\cdots,s\}}P(\sigma,\eta_J),\notag
\end{eqnarray}
where the supremum is taken over all $\Psi$-invariant probability measures of the form $\mathbf{P}_{\nu'}\times\mu$ in which $\mathbf{P}_{\nu'}=\nu'^{\mathbb{Z}}$ is the product measure of some Borel probability measure $\nu'$  on $\mathcal{G}_{\alpha}$ with $\nu'_i=\nu'(f_i)$, and in the last line we use the variational principle for the pressure of $\eta_J$ with respect to $\sigma$. From Chapter 9 of \cite{Walters}, we get that,
\begin{equation}\label{pressureJ}
P(\sigma,\eta_J)=\log\Big(\sum_{i=1}^k\exp\big(\displaystyle\sum_{j\in J}d_j\lambda_{i,j}\big)\Big).
\end{equation}
Therefore, by (\ref{pressure}) and (\ref{pressureJ}), the desired inequality (\ref{Fried2}) holds. Moreover, by
Theorem 9.16 of \cite{Walters}, $\eta_J$ has a unique equilibrium state which is the product measure defined by the measure on $\mathcal{G}_{\alpha}$ which gives the element $f_i$, $1\leq i\leq k$, measure
$$
\nu_i=\frac{\displaystyle\exp\big(\sum_{j\in J}d_j\lambda_{i,j}\big)}{\displaystyle\sum_{i=1}^k\exp\big(\displaystyle\sum_{j\in J}d_j\lambda_{i,j}\big)}.
$$
So any measure $\nu$ defined by $\nu_i$ as in (\ref{measures}) satisfies that the product measure $\mathbf{P}_{\nu}\times\mu$ is a measure with maximal entropy of $\Psi$.

Furthermore, if $\mu$ is $\alpha$-ergodic and $\mu$(Coinc$(f_i, f_j))=0$ for each pair of generators $f_i,f_j,1\le i\neq j\le k$, then we conclude that $\tilde{\pi}$ is one-to-one on a set of full $\mathbf{P}_{\nu}\times\mu$ measure.  So
 \begin{equation}\label{equality2}
h_{\mathbf{P}_{\nu}\times\mu}(\Psi)=h_{\tilde{\pi}(\mathbf{P}_{\nu}\times\mu)}(\sigma_{\alpha}).
 \end{equation}
Moreover, by the variational principle for $\sigma_{\alpha}$  we have that
 \begin{equation}\label{equality3}
h_{\tilde{\pi}(\mathbf{P}_{\nu}\times\mu)}(\sigma_{\alpha})\leq h(\sigma_{\alpha}).
 \end{equation}
By (\ref{pressure}), (\ref{equality2}) and (\ref{equality3}), $h(\Psi)\leq h(\sigma_{\alpha})$. Together with the previous inequality $h(\sigma_{\alpha})\leq h(\Psi)$ we have $h(\Psi)=h(\sigma_{\alpha})$, and hence by (\ref{pressure}) and (\ref{pressureJ}), formula (\ref{Fried3}) holds.
\end{proof}

\begin{remark}
In Theorem \ref{Friedlandentropy}, we require that the invariant measure of $\Psi$ is in the form of $\mathbf{P}_{\nu}\times\mu$, we can see section 3.4 of \cite{Liu01} for the existence of such a measure for certain systems. In particular, from \cite{Zhu14}, if $X=\mathbb{T}^d$ is the $d$-dimensional torus and $\alpha$ is a linear $\mathbb{Z}^k$-action as in Example \ref{example}, then all the assumptions in Theorem \ref{Friedlandentropy} hold, so formula (\ref{Fried3}) holds, and hence,
\begin{equation}\label{entropytorus1}
 h(\sigma_{\alpha})=\max_{J\subset \{1,\cdots,s\}}\log\Bigl(\sum_{i=1}^{k}\prod_{j\in J}\lambda_{i,j}^{d_j}\Bigr),
 \end{equation}
in which $s, \lambda_{i,j}, d_j$ come from Example \ref{example}.
\end{remark}

\end{document}